\documentclass[a4paper]{article}

\usepackage[english]{babel}

\usepackage[dvips]{color} 
\usepackage{lscape}
\usepackage{amsmath,amssymb,amsthm}
\usepackage{amscd}
\usepackage{enumerate}
\usepackage{latexsym}
\usepackage{mathrsfs}
\usepackage[dvips]{graphicx}

\newcommand{\Q}{\mathbb{Q}}

\newcommand{\Z}{\mathbb{Z}}
\newcommand{\C}{\mathbb{C}}
\theoremstyle{definition}
\newtheorem{Def}{Definition}[section]
\newtheorem{Rem}[Def]{Remark}

\theoremstyle{theorem}
\newtheorem{Th}[Def]{Thorem}
\newtheorem{Prop}[Def]{Proposition}
\newtheorem{Lem}[Def]{Lemma}

\begin{document}
\title{
$\mathbb Q$-bases of the N\'eron-Severi  groups of certain elliptic surfaces
}
\author{Masamichi Kuroda}
\date{}
\maketitle
\begin{abstract}
P. Stiller computed the Picard numbers of several families of elliptic surfaces, 
the rank of the N\'eron-Severi groups
of these surfaces. 
However he did not give the generators of these groups. 
In this paper we give 
$\mathbb Q$-bases
of these groups explicitly. 
If these surfaces are rational, 
then we also show that they are $\Z$-bases. 
\end{abstract}

\section{Introduction}
\footnote[0]{{\hspace{-0.6cm} \it MSC2010:} 14J27 \\
{\it Keywords:} elliptic surfaces, N\'eron-Severi  groups}
An elliptic surface is a surface which has a surjective map onto a curve such that the generic fiber is a 
curve of genus one (see \cite{5}). 
The N\'eron-Severi group 
is the group of divisors modulo algebraic equivalence (see Exercise V.$1.7$ in \cite{7} ). 
This group is known to be a finitely generated abelian group, and
its rank is called the Picard number.  
P. Stiller computed the Picard numbers of several families of elliptic surfaces by studying the action 
of certain automorphisms 
on the cohomology group (\cite{1}). 
However he did not give the generators of these groups. 
The purpose of this paper is to give 
$\mathbb Q$-bases 
of the N\'eron-Severi groups of Stiller's list of elliptic surfaces ( Example $1, \ 2, \ 3, \ 4$ and $5$ in \cite{1}) explicitly. 

We now explain briefly how to construct such $\mathbb Q$-bases. 
Let 
${\cal E}$ be an elliptic surface. We denote by ${\rm NS} ({\cal E})$ the N\'eron-Severi group of ${\cal E}$. 
T. Shioda proved that the group ${\rm NS} ({\cal E})$ 
is generated by fibral divisors and horizontal divisors (\cite{2}). 
Here we mean by a fibral divisor a sum of irreducible components of 
fibers, 
and by a horizontal divisor a sum of images of sections. 
Let ${\cal E}_n \rightarrow {\mathbb P}_{\C}^1 \ (n \in {\mathbb N})$ be one 
of the families of elliptic surfaces 
of Example $1, \ 2, \ 3, \ 4$ and $5$ in \cite{1}, and let $E_n $ be the generic fiber of ${\cal E}_n$ for each $n$. 
$E_n \ (n \in {\mathbb N})$ are elliptic curves over the function field $\C (t)$. 
Computing the Picard number of an elliptic surface is equivalent to 
determining the Mordell-Weil rank (i.e., the rank of the Mordell-Weil group) of 
the generic fiber.  
In \cite{1}, he proved that 
for each family ${\cal E}_n \rightarrow {\mathbb P}_{\C}^1 \ (n \in {\mathbb N})$, 
there exists a finite set ${\rm Adm}_i \ (1 \leq i \leq 5)$ 
of natural numbers 
such that the Mordell-Weil rank of $E_n / \C (t)$ is 
$r = \sum_{d \mid n, \ d \in {\rm Adm}_i}^{} \varphi (d)$, 
where $\varphi $ is the Euler function, i.e., $\varphi (d)$ is the number of positive integers less than or equal to $d$ that are relatively prime to $d$. 
In this paper we shall construct $r$ rational points of $E_n$
in an ad hoc manner, and show the linear independence of the associated 
divisors in ${\rm NS} ({\cal E}_n)$. 
If ${\cal E}_n$ is rational, 
then we further show that they form a $\Z$-basis.

This paper is organized as follows. 
Section $2$ is a quick review of some basic 
results on 
the N\'eron-Severi groups of elliptic surfaces. 
Section 3 is the heart of this paper. We give a number of 
$\mathbb Q$-bases or $\Z$-bases of the N\'eron-Severi groups of Stiller's list of elliptic surfaces. 
In section $4$, 
we give an alternative proof of Stiller's computations of the Picard numbers. 

\section{The N\'eron-Severi  group of an elliptic surface}

In this paper, we mean by an elliptic surface a surjective morphism
$f : {\cal E} \rightarrow C$ onto a curve with a section (say zero section) 
such that the generic fiber of $f$ is an elliptic curve.
Let $f : {\cal E} \longrightarrow \mathbb{P}^1 _{\C}$ be a non-split minimal elliptic surface, 
and let $E / \C (t)$ be the generic fiber. 
There is a natural group isomorphism between the Mordell-Weil group of $E/ \C (t)$, denoted by $E (\C (t))$, and 
the group of sections of ${\cal E}$ over ${\mathbb P}_{\C} ^1$, denoted by 
${\cal E} ({\mathbb P}_{\C} ^1)$ (see Prop. $3.10.$ (c) in \cite{4}):
\begin{eqnarray} \label{isom}
E (\C (t)) \hspace{0.5cm}  &\overset{\sim }{\longrightarrow  }& \hspace{1.6cm} {\cal E} ({\mathbb P}_{\C} ^1), 
\end{eqnarray}
\vspace{-1cm}
\begin{eqnarray*}
\hspace{1.6cm} P = (x_P , \ y_P) \ &\longmapsto  &\  ( \sigma _P   : t \longmapsto (x_P (t) , \ y_P (t) , \ t)).
\end{eqnarray*}
According to the Mordell-Weil theorem, $E (\C (t))$ is a finitely generated group. 
In the following, for each $P \in E(\C (t))$, we denote by $(P)$ the image in ${\cal E}$ of the section corresponding to $P$. 
For simplicity, we denote by $\infty$ the image of zero section, that is, the section corresponding to 
zero element $O \in E (\C (t))$. 

The singular fibers are classified by Kodaira ({\rm see} \cite{5}). We shall follow Kodaira's notation. 
Let $\Sigma ({\cal E})$ be the finite set of points $t$ in ${\mathbb P}_{\C}^1$ 
such that ${\cal E}_t := f^{-1} (t)$ is a singular fiber. 
For each $t \in 
{\mathbb P}^1_{\C}
$, let $m_t$ be the number of irreducible components of the fiber ${\cal E}_t $, 
and we denote by $F _{t , \ a} \ \ (0 \leq a \leq m_t -1) $ the irreducible components. 
If $t \in {\mathbb P}^1_{\C} \setminus \Sigma ({\cal E}) $, then ${\cal E}_t = F_{t, \ 0}$ is a smooth fiber, 
and we have $\left\{ t \in {\mathbb P}_{\C} ^1 \mid m_t \geq 2 \right\} = \left\{ t \in \Sigma ({\cal E}) \mid m_t \geq 2 \right\}$. 
We fix a general fiber $C_0 := {\cal E}_{t_0} , \ t_0 \in {\mathbb P}^1_{\C} \setminus \Sigma ({\cal E})$, 
and we take $F_{t, \ 0}$ to be the unique component of ${\cal E}_t$ intersecting with $\infty$. 

We denote by $E(\C (t))_{\rm free}$ the quotient group 
$E(\C (t)) / E(\C (t))_{\rm tor}$, where 
$E(\C (t))_{\rm tor}$ is the torsion subgroup. 
Let $r$ be the Mordell-Weil rank of $E / \C (t)$, i.e., the rank of $E(\C (t))$, and we take $r$ generators $P_1, \ \cdots , \ P_r$ of $E(\C (t))_{\rm free}$. 
We put 
$$D_{i} = (P_i) - \infty  \in {\rm Div} ({\cal E}) \ \ (1 \leq i \leq r), $$
where ${\rm Div} (\cal E)$ is the group of divisors on ${\cal E}$. 
\begin{Prop} \label{Shioda}
The free part of the N\'eron-Severi group ${\rm NS } ({\cal E})$ of the elliptic surface ${\cal E}$, denoted by ${\rm NS } ({\cal E})_{\rm free}$, 
is generated by the following divisors: 
\begin{eqnarray} \label{generators}
C_0 , \ \infty , \ D_{1} , \ \dots , \ D_{r}, \ F_{t , \ a} \ \ (t \in \Sigma ({\cal E}) , \ 1 \leq a \leq m_t - 1). 
\end{eqnarray}
In particular, the Picard number $\rho $ of the elliptic surface ${\cal E}$ is given by 
$$
\rho = r + 2 + \sum_{t \in \Sigma ({\cal E})}^{} (m_t - 1). 
$$
\end{Prop}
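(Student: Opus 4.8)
The plan is to derive this from Shioda's generation theorem \cite{2}, which asserts that ${\rm NS}({\cal E})_{\rm free}$ is generated by fibral divisors and by horizontal divisors; the task is then to rewrite each of these two families in terms of the list \eqref{generators}. Recall that a fibral divisor is a $\Z$-combination of the components $F_{t,a}$, while a horizontal divisor is a $\Z$-combination of images of sections, which by the isomorphism \eqref{isom} are precisely the divisors $(Q)$ with $Q\in E(\C(t))$. Since the assertion about $\rho$ follows at once from generation together with the $\Q$-linear independence of \eqref{generators}, it is enough to establish these two facts (and this is all that is used for the $\Q$-bases of the next section; the generation statement is integral in the additional absence of torsion in $E(\C(t))$).

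First I would handle the fibral divisors. Any two fibres of $f$ are linearly equivalent, being pullbacks of two points of ${\mathbb P}^1_{\C}$, so for each $t$ the whole fibre ${\cal E}_t=\sum_{a=0}^{m_t-1}n_{t,a}F_{t,a}$, with $n_{t,a}$ the Kodaira multiplicities, is linearly equivalent to $C_0$; as $F_{t,0}$ meets $\infty$ it has $n_{t,0}=1$, so $F_{t,0}\equiv C_0-\sum_{a\ge 1}n_{t,a}F_{t,a}$ in ${\rm NS}({\cal E})$. Hence every fibral divisor lies in the group generated by $C_0$ and the $F_{t,a}$ with $t\in\Sigma({\cal E})$, $1\le a\le m_t-1$.

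Next the horizontal divisors. Write such a divisor as $\sum_j(Q_j)$ and put $D_Q:=(Q)-\infty$; it suffices to see that modulo fibral divisors the map $Q\mapsto D_Q$ is additive. This holds because for $Q,Q'\in E(\C(t))$ the divisor $(Q+Q')+\infty-(Q)-(Q')$ restricts on the generic fibre $E$ to a principal divisor — the relation on $E$ among the points $Q+Q'$, $Q$, $Q'$, $O$ coming from the group law — so, after subtracting a principal divisor of ${\cal E}$, it becomes a divisor with trivial restriction to $E$, i.e.\ a fibral divisor. Thus $Q\mapsto D_Q$ induces a homomorphism $E(\C(t))\to {\rm NS}({\cal E})_{\rm free}/(\text{fibral divisors})$ sending $P_1,\dots,P_r$ to $D_1,\dots,D_r$; after tensoring with $\Q$ it annihilates $E(\C(t))_{\rm tor}$, so every horizontal divisor lies, up to fibral divisors, in the $\Q$-span of $\infty,D_1,\dots,D_r$. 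Combining the two reductions shows that \eqref{generators} generates ${\rm NS}({\cal E})_{\rm free}$ over $\Q$ (and over $\Z$ when $E(\C(t))$ is torsion-free).

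It remains to prove $\Q$-linear independence, which yields the formula for $\rho$ (generation alone only gives $\rho\le r+2+\sum_t(m_t-1)$). Using the intersection pairing, $C_0^2=0$, $C_0\cdot\infty=1$, $C_0\cdot F_{t,a}=0$ and $\infty\cdot F_{t,a}=0$ for $a\ge1$ by the choice of labelling, while for each $t$ the Gram matrix of $F_{t,1},\dots,F_{t,m_t-1}$ is the negative of the Cartan matrix of a finite-type root system, hence negative definite, by Kodaira's classification of singular fibres. The Gram matrix of $C_0,\infty,\{F_{t,a}\}_{a\ge1}$ is therefore block-diagonal, with the $\langle C_0,\infty\rangle$-block of determinant $-1$ and the fibral blocks negative definite, so it is non-degenerate and these $2+\sum_t(m_t-1)$ classes are independent; and $D_1,\dots,D_r$ remain independent in the quotient, because by Shioda--Tate \cite{2} that quotient is isomorphic up to torsion to $E(\C(t))$, which has rank $r$. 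I expect the genuine content to sit in the additivity-modulo-fibral property of sections and the non-degeneracy of the pairing on the span of the trivial divisors; the two reduction steps are formal.
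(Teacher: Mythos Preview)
The paper's own proof is the single line ``See Theorem~1.1 in \cite{2}.'' --- it treats the proposition as a citation of Shioda--Tate and gives no argument. You have instead sketched the content of that theorem, and your sketch is correct: the reduction of all fibral classes to $C_0$ and the non-identity components via $F_{t,0}\equiv C_0-\sum_{a\ge1}n_{t,a}F_{t,a}$, the additivity of $Q\mapsto (Q)-\infty$ modulo fibral divisors coming from the group law on the generic fibre, and the non-degeneracy of the intersection form on the trivial lattice $\langle C_0,\infty,\{F_{t,a}\}_{a\ge1}\rangle$ are exactly the standard ingredients of Shioda's proof. So you have supplied more than the paper does, and what you supply is right.

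One remark: your caveat that $\Z$-generation requires $E(\C(t))$ to be torsion-free is correct and in fact necessary --- if there are torsion sections, the listed divisors need not $\Z$-generate ${\rm NS}({\cal E})_{\rm free}$. The proposition as stated in the paper is slightly loose on this point, but since the paper only uses it to produce $\Q$-bases (and verifies $\Z$-bases in the rational cases by direct computation of $\det(M)$), nothing downstream is affected.
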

\begin{proof}
See Theorem $1.1$ in \cite{2}. 
\end{proof}
In \cite{1}, he computed the Mordell-Weil rank $r$, but did not give $D_i$'s explicitly. 
We will give 
$r$ linearly independent points of 
the Mordell-Weil group in section 3. 
Note that 
these points are not always generators of the group. 
We end this section by introducing a practical way to show the linear 
independence of the divisors 
$C_0 , \ \infty , \ D_{1} , \ \dots , \ D_{r}, \ F_{t , \ a}$ $(t \in \Sigma ({\cal E}) , \ 1 \leq a \leq m_t - 1)$, or equivalently the intersection matrix $M$ of 
these divisors has a nonzero determinant. 

For each $P_i \in E (\C (t))$, we have 
$D_{i} \cdot C_0 = ((P_i) - \infty) \cdot C_0 = 1 - 1 = 0. $
Then there exists a fibral divisor $\Phi _{i } \in {\rm Div} ({\cal E}) \otimes \Q$ such that 
$$(D_{i} + \Phi _{i}) \cdot F = 0 \ \ {\rm for \ all \ fibral \ divisors \ } F \in {\rm Div} (\cal E). $$
More explicitly the divisor $\Phi_i$ is obtained in the following way ({\rm see} Prop. $8.3$ in \cite{4}): \\
We set $a_{t 0} (P_i)= 0$ for all $t \in {\mathbb P}_{\C}^1$. Further, when $m_t \geq 2$ we consider the following system of linear equations: 
$$\sum_{k = 1}^{m_t - 1} a_{t k } (P_i) F_{t , \ k} \cdot F_{t , \ l} = - D_{i}\cdot F_{t , \ l} \ \ (1 \leq l \leq m_t - 1) . $$
This is a system of $m_t - 1$ equations in the $m_t - 1$ variables $a_{t k} (P_i)$. 
Since 
the intersection matrix 
$\left( F_{t, \ i} \cdot F_{t, \ j} \right) _{1 \leq i, \ j \leq m_t - 1}$
has a nonzero determinant, this system of equations has a unique solution in rational numbers $a_{t k} (P_i) \in \Q$. 
Then the divisor 
\begin{eqnarray*} 
\Phi _{i} := \sum_{t \in \mathbb{P}_{\C}^1}^{} \sum_{k = 0}^{m_t - 1} a_{t k} (P_i) F_{t, \ k} 
= \sum_{t \in \{ t_1, \ \cdots, \ t_s \}}^{} \sum_{k = 1}^{m_t - 1} a_{t k}(P_i) F_{t, \ k}
\in {\rm Div} ({\cal E}) \otimes \Q
\end{eqnarray*}
has the desired property, 
where we set $\left\{ t_1, \ \cdots , \ t_s \right\} 
= \left\{ t \in \Sigma ({\cal E}) \mid m_t \geq 2 \right\}$. 
Note that since $F_{t_\alpha , \ k} \cdot F_{t_\beta , \ l} = 0 \ (\alpha \not = \beta )$, we have 
$$0 = (D_i + \Phi _i) \cdot F_{t_\alpha , \ j} = \left( D_i + \sum_{k = 1}^{m_{t_\alpha } - 1} a_{t_\alpha  k} (P_i) F_{t_\alpha , \ k}  \right) \cdot F_{t_\alpha , \ j} .$$ 
We fix a uniformizer $u_t \in \C (t)$ at $t$, that is, ${\rm ord}_t (u_t) = 1$. 
Let $f^{*} : {\rm Div} ({\mathbb P}_{\C}^1) \longrightarrow {\rm Div} ({\cal E})$ be a homomorphism 
defined by extending 
$(t) \longmapsto \sum_{j = 0}^{m_t - 1} {\rm ord}_{F_{t, \ j}} (u_t \circ f) F_{t, \ j}$ linearly. 
For each points $t_1, \ t_2 \in {\mathbb P}^1_{\C} \setminus \Sigma ({\cal E})$ with $t_1 \not = t_2$, 
since $C_0$ is algebraic equivalent to $f^{*} (t_i) \ (i = 1, \ 2)$, we have 
$$C_0^2 = f^{*} (t_1) \cdot f^{*} (t_2) = 0.$$
Now it is not hard to show the following lemma. 
\begin{Lem} \label{int.mat.}
Let $M$ be the intersection matrix of divisors 
$C_0 , \ \infty , \ D_{1} , \ \dots $, $D_{r}, \ F_{t , \ a} \ (t \in \Sigma ({\cal E}) , \ 1 \leq a \leq m_t - 1)$, and let 
$M_{\alpha } \ (1 \leq \alpha \leq s)$ be the intersection matrix of divisors $F_{t_\alpha , \ 1}, \ \cdots , \ F_{t_\alpha , \ m_{t_\alpha } - 1}$. 
Put 
\begin{eqnarray*}
N = \left[
\begin{array}{c c c}
(D_{1} + \Phi _{1}) \cdot D_{1} & \cdots &  (D_{1} + \Phi _{1} ) \cdot D_{r} \\
\vdots & \ddots & \vdots \\
(D_{r} +\Phi _{r}) \cdot D_{1} & \cdots & (D_{r} + \Phi _{r}) \cdot D_{r} \\
 \end{array}
\right] .
\end{eqnarray*}
Then we have 
$${\rm det} (M) = - {\rm det} (N) \prod_{\alpha =1}^{s} {\rm det} (M_\alpha ). $$ 
In particular, ${\rm det}(M) \not = 0$ if and only if $ {\rm det} (N) \not = 0$ since  $M_\alpha$ has a nonzero determinant for each $\alpha$. 
Note that each $M_{\alpha }$ gives one of the root lattices $A_n, \ D_n, \ E_6, \ E_7$ or $E_8$, and 
${\rm det} (M_{\alpha })$ equals the number of simple components of the singular fiber ${\cal E}_{t_{\alpha }}$. 
\end{Lem}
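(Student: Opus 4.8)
The plan is to reduce the computation of $\det(M)$ to the product of the determinants of its natural diagonal blocks by performing, over $\Q$, simultaneous row-and-column operations on $M$, each of which replaces one generator $v$ by $v+w$ with $w$ a $\Q$-linear combination of the remaining generators; such an operation corresponds to a change of basis of determinant $1$ and so leaves $\det(M)$ unchanged. I order the divisors as $C_0,\ \infty,\ D_1,\dots,D_r$ followed by the fibral components $F_{t_\alpha,k}$ $(1\le\alpha\le s,\ 1\le k\le m_{t_\alpha}-1)$, grouped by $\alpha$, so that $M_1\oplus\cdots\oplus M_s$ already sits in the bottom-right corner up to coupling with the other divisors.

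First I record the intersection numbers that make the relevant off-diagonal blocks disappear after the operations. Since $C_0={\cal E}_{t_0}$ with $t_0\notin\Sigma({\cal E})$ is disjoint from every singular fiber and $\infty$ meets a singular fiber only in its identity component $F_{t_\alpha,0}$, one has $C_0\cdot F_{t_\alpha,k}=\infty\cdot F_{t_\alpha,k}=0$ for $k\ge 1$; moreover $F_{t_\alpha,k}\cdot F_{t_\beta,l}=0$ for $\alpha\ne\beta$, $C_0^2=0$, $C_0\cdot\infty=1$ and $C_0\cdot D_i=0$, all of which appear in the discussion preceding the lemma. Then, for each $i$, I add to the $D_i$-row and $D_i$-column the appropriate multiples of the $F_{t_\alpha,k}$-rows and columns, replacing $D_i$ by $\tilde D_i:=D_i+\Phi_i$; by the defining property of $\Phi_i$ the divisor $\tilde D_i$ is orthogonal to all fibral divisors, so $\tilde D_i\cdot F_{t_\alpha,k}=0$ and $\tilde D_i\cdot C_0=0$, while $\tilde D_i\cdot\tilde D_j=(D_i+\Phi_i)\cdot D_j$ (since $\Phi_j$ is fibral), which is exactly the $(i,j)$-entry of $N$.

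Next I clear the remaining coupling of the $\tilde D_i$ to $\infty$: using $C_0\cdot\infty=1$, $C_0^2=0$, $C_0\cdot\tilde D_j=0$ and $C_0\cdot F_{t_\alpha,k}=0$, I replace $\tilde D_i$ by $\hat D_i:=\tilde D_i-(\tilde D_i\cdot\infty)\,C_0$, a symmetric operation on the $C_0$-row and column, after which $\hat D_i\cdot C_0=\hat D_i\cdot\infty=\hat D_i\cdot F_{t_\alpha,k}=0$ and $\hat D_i\cdot\hat D_j=\tilde D_i\cdot\tilde D_j$. With respect to the new generators $C_0,\infty,\hat D_1,\dots,\hat D_r$ together with the $F_{t_\alpha,k}$, the intersection matrix is block diagonal,
$$\begin{pmatrix}0&1\\ 1&\infty^2\end{pmatrix}\ \oplus\ N\ \oplus\ M_1\ \oplus\ \cdots\ \oplus\ M_s,$$
the first block coming from $C_0,\infty$, the second from the $\hat D_i$, and the last $s$ blocks from the fibral components (which are orthogonal across distinct $t_\alpha$). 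Taking determinants yields ${\rm det}(M)=-{\rm det}(N)\prod_{\alpha=1}^{s}{\rm det}(M_\alpha)$; the ``in particular'' statement follows since each $M_\alpha$, being the intersection form of the non-identity components of a Kodaira fiber, is negative definite and hence ${\rm det}(M_\alpha)\ne 0$.

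For the closing remark I would invoke Kodaira's classification of singular fibers (\cite{5}): the non-identity components of a fiber of type $I_b$ $(b\ge 2)$, $III$, $IV$, $I_b^{*}$, $IV^{*}$, $III^{*}$ or $II^{*}$ form a configuration whose intersection matrix is the negative of the Cartan matrix of $A_{b-1}$, $A_1$, $A_2$, $D_{b+4}$, $E_6$, $E_7$ or $E_8$ respectively, and a short case-by-case check shows that its determinant ($b$, $2$, $3$, $4$, $3$, $2$, $1$) coincides with the number of multiplicity-one components of the fiber. The only place that needs genuine care is the bookkeeping: one must check that each passage $D_i\to\tilde D_i\to\hat D_i$ is carried out by determinant-preserving operations applied \emph{symmetrically} to rows and columns, and that clearing the coupling to $\infty$ through $C_0$ does not create new coupling to the fibral part; once the vanishing intersection numbers listed above are in hand, the block decomposition and the determinant formula are immediate.
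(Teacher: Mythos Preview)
Your argument is correct and is precisely the standard block-diagonalization that the paper has in mind when it says ``it is not hard to show the following lemma'': using the intersection facts recorded just before the lemma ($C_0^2=0$, $C_0\cdot\infty=1$, $C_0\cdot D_i=0$, $\infty\cdot F_{t_\alpha,k}=0$, and the defining property of $\Phi_i$), the congruence transformations $D_i\mapsto D_i+\Phi_i\mapsto (D_i+\Phi_i)-((D_i+\Phi_i)\cdot\infty)C_0$ put $M$ into the block form $\bigl(\begin{smallmatrix}0&1\\1&\infty^2\end{smallmatrix}\bigr)\oplus N\oplus M_1\oplus\cdots\oplus M_s$, whose determinant is $-\det(N)\prod_\alpha\det(M_\alpha)$. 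The only cosmetic slip is the phrase ``a symmetric operation on the $C_0$-row and column'' (you mean on the $\tilde D_i$-row and column, using the $C_0$-row and column), but the computation you carry out is the right one.
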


\section{Stiller's list of elliptic surfaces}

In this section, we give 
$\mathbb Q$-bases 
of the N\'eron-Severi groups 
of Stiller's list of elliptic surfaces 
( Example $1, \ 2, \ 3, \ 4$ and $5$ in \cite{1}) explicitly. 
If these surfaces are rational, 
then we also show that they are $\Z$-bases. 
Note that these N\'eron-Severi groups are torsion free (see Theorem $1.2$ in \cite{2.5}). 
We give a proof in detail in the case of Example $4$. 
We only give results in the other cases because the argument is the same as in Example $4$. 
In this paper, let $\zeta _n$ be a primitive $n$-th root of unity for a natural number $n$. 

\subsection{Example 4 in \cite{1}}

Recall the Example $4$ in \cite{1} which is the minimal elliptic surface whose generic fiber is the elliptic curve 
defined by an equation 
\begin{equation} \label{Ex4}
Y^2 = 4 X^3 - 3 u ^{4 n} X + u^{5 n} (u^{n} - 2) \ \ (u \in \mathbb{P}^1 _{\C}, \ n \in {\mathbb N}) 
\end{equation}
over $\C (u)$. 
We change the variables in the following way 
$$
X = \frac{2^3 (3 x + 1)}{3^6 t^{2 n}}, \ Y = \frac{-2^{5} \sqrt{2} y}{3^7 \sqrt{3} t^{3 n}} , \ u = \sqrt[n]{ \frac{- 4}{27}} t^{-1}. 
$$
Then the defining equation (\ref{Ex4}) becomes 
\begin{equation} \label{Ex4 2}
y^2 = x^3 + x^2 + t^n \ \ (t \in \mathbb{P}^1 _{\C}, \ n \in {\mathbb N}) 
\end{equation}
Let $E_n$ be the elliptic curve defined by (\ref{Ex4 2}) 
and $f : {\cal E}_n \longrightarrow {\mathbb P}_{\C} ^1$ 
be the associated elliptic surface. 
For the later use, we write down the construction of ${\cal E}_n$. 
Put 
\begin{eqnarray*}
&&\overline{X_1} = \left\{ ([X: \ Y: \ Z], \ t) \in \mathbb{P}_{\C}^2 \times \mathbb{A}^1 _t \mid Y^2 Z = X^3 + X^2 Z + t^n Z^3  \right\}, \\
&&g _1 : \overline{X_1} \longrightarrow \mathbb{A}^1 _t \  ; \  ([X: \ Y: \ Z], t) \longmapsto t .
\end{eqnarray*}
Let us write $n = 6 l + k$ with $1 \leq k \leq 6$. 
By putting $\overline{x} = x/ t^{2(l+1)} , \ \overline{y} = y / t^{3(l+1)} , \ \overline{t} = 1 / t $, 
we obtain the minimal Weierstrass form
$$\overline{y}^2 = \overline{x} ^3 + \overline{t}^{2(l+1)} \overline{x}^2 + \overline{t}^{6 - k} .$$
over $t=\infty$. 
Put 
\begin{eqnarray*}
&&\overline{X_2} = \left\{ ([\overline{X}: \ \overline{Y}: \ \overline{Z}], \ \overline{t}) \in \mathbb{P}^2 \times \mathbb{A}^1 _{\overline{t}} \mid 
\overline{Y}^2 \overline{Z} = \overline{X} ^3 + \overline{t}^{2(l+1)} \overline{X}^2 \overline{Z} + \overline{t}^{6 - k} \overline{Z}^3  \right\}, \\
&&g _2 : \overline{X_2} \longrightarrow \mathbb{A}^1 _{\overline{t}} \  ; \  ([\overline{X}: \ \overline{Y}: \ \overline{Z}], \ \overline{t}) \longmapsto \overline{t}. 
\end{eqnarray*}
By gluing the surfaces $\overline{X _1}$ and 
$\overline{X _2}$, we obtain a projective surface $W$ together with a surjective morphism 
$g : W \longrightarrow {\mathbb P}_{\C}^1$. 
The surface $W$ has singularities in $g^{-1}(0)$ and $g^{-1}(\infty)$. 
Taking the minimal resolution of singularities of $W$, we obtain ${\cal E}_n$ with $f : {\cal E}_n \longrightarrow {\mathbb P}_{\C} ^1$. 

By using Tate's algorithm (see \cite{4}), 
one can show that the surface ${\cal E}_n$ has singular fibers of type ${\rm I}_n $ over $0$, 
type ${\rm I}_1 $ over $\zeta_n ^i  \sqrt[n]{ - 4 / 27} \ \ (0 \leq i \leq n - 1)$ and 
type ${\rm II}^* $ (resp. ${\rm IV}^*, \ {\rm I}_0^*, \ {\rm IV}, \ {\rm II}, \ {\rm I}_0$) over $\infty$ as $n \equiv 1$ (resp. $2, \ 3, \ 4, \ 5, \ 0$) modulo $6$. 
Stiller computed the Mordell-Weil rank $r = {\rm rank} (E_n (\C (t)))$ and hence 
the Picard number $\rho = {\rm rank} ({\rm NS} ({\cal E}_n))$, which is given by as follows. 
\begin{center}
Table $1.$ The Mordell-Weil rank $r$ and the Picard number $\rho $
{\normalsize
\begin{tabular}{|c|l|l|}
\hline
$n$ & \multicolumn{1}{|c|}{ $r$} & \multicolumn{1}{|c|}{$\rho $} \\ \hline \hline
$6 l + 1, \ l \geq 0$ & $r = \left\{
\begin{array}{l l}
 4 & {\rm if} \ l \equiv 4 \ {\rm mod} \ 5, \\
 0 & {\rm otherwise.} \\
\end{array}
\right.$ & $\rho  = \left\{
\begin{array}{l}
 n + 13 \\
 n + 9 \\
\end{array}
\right.$ \\ \hline
$6 l + 2, \ l \geq 0$ & $r = \left\{
\begin{array}{l l}
 7 & {\rm if} \ 3l + 1 \equiv 0 \ {\rm mod} \ 10, \\
 5 & {\rm if} \ 3l + 1 \equiv 5 \ {\rm mod} \ 10, \\
 3 & {\rm if} \ 3l + 1 \equiv 2,\ 4,\ 6, \ 8 \ {\rm mod} \ 10, \\
 1 & {\rm otherwise.} \\
\end{array}
\right.$ & $\rho  = \left\{
\begin{array}{l}
 n + 14 \\
 n + 12 \\
 n + 10 \\
 n + 8 \\
\end{array}
\right.$ \\ \hline
$6 l + 3, \ l \geq 0$ & $r = \left\{
\begin{array}{l l}
 6 & {\rm if} \ l \equiv 2 \ {\rm mod} \ 5, \\
 2 & {\rm otherwise.} \\
\end{array}
\right.$ & $\rho  = \left\{
\begin{array}{l}
 n + 11 \\
 n + 7 \\
\end{array}
\right.$ \\ \hline
$6 l + 4, \ l \geq 0$ & $r = \left\{
\begin{array}{l l}
 7 & {\rm if} \ 3l + 2 \equiv 0 \ {\rm mod} \ 10, \\
 5 & {\rm if} \ 3l + 2 \equiv 5 \ {\rm mod} \ 10, \\
 3 & {\rm if} \ 3l + 2 \equiv 2,\ 4,\ 6, \ 8 \ {\rm mod} \ 10, \\
 1 & {\rm otherwise.} \\
 \end{array}
\right.$ & $\rho  = \left\{
\begin{array}{l}
 n + 10 \\
 n + 8 \\
 n + 6 \\
 n + 4 \\
\end{array}
\right.$ \\ \hline
$6 l + 5, \ l \geq 0$ & $r = \left\{
\begin{array}{l l}
 4 & {\rm if} \ l \equiv 0 \ {\rm mod} \ 5, \\
 0 & {\rm otherwise.} \\
 \end{array}
\right. $ & $\rho  = \left\{
\begin{array}{l}
 n + 5 \\
 n + 1 \\
\end{array}
\right.$ \\ \hline
$6 l + 6, \ l \geq 0$ & $r = \left\{
\begin{array}{l l}
 9 & {\rm if} \ l + 1 \equiv 0 \ {\rm mod} \ 10, \\
 7 & {\rm if} \ l + 1 \equiv 5 \ {\rm mod} \ 10, \\
 5 & {\rm if} \ l + 1 \equiv 2,\ 4,\ 6, \ 8 \ {\rm mod} \ 10, \\
 3 & {\rm otherwise.} 
 \end{array}
\right.$ & $\rho  = \left\{
\begin{array}{l}
 n + 10 \\
 n + 8 \\
 n + 6 \\
 n + 4 \\
\end{array}
\right.$ \\ \hline
\end{tabular}
}
\end{center}
The result for $r$ can be summarized in the following way. 
Put ${\rm Adm}_4 = \{ 2, \ 3, \ 4$, $5\}$. Then 
\begin{equation} \label{r4}
r = 
\sum_{
d \mid n, \ 
d \in {\rm Adm}_4
}
\varphi (d) ,
\end{equation}
where $\varphi $ is the Euler function. 
We now define $\varphi(d)$ rational points of $E_d$ 
for each $d\in {\rm Adm}_4$. 
\begin{Def}
For $d \in {\rm Adm}_4$ and $j \in (\Z/d \Z)^\times$, we define $\C (t)$-rational points $P_{d, \ j}$ of $E_d$ as follows. 
\begin{eqnarray*}
&& P_{2, \ 1} = \left( 0 , \ - t \right) , \\
&& P_{3, \ j} = \left( - \zeta _3 ^j t  , \ - \zeta _3 ^j t  \right) , \\
&& P_{4, \ 1} = \left( \sqrt{2} t + 2 t^2, \ \sqrt{2} t + 3 t^2 + 2 \sqrt{2} t^3 \right) , \\
&& P_{4, \ 3} = \left( -2 + 2 (-1)^{\frac{1}{4}} t , \ - 2 \sqrt{-1} + 4 \sqrt{-1} (-1)^{\frac{1}{4}} t + t^2 \right) , \\
&& P_{5, \ j} = \left( 2 ^{- \frac{2}{5}} \zeta _5 ^{2 j} t^{2} , \ 2 ^{-\frac{2}{5}} \zeta _5 ^{2 j} t^{2} +  2 ^{-\frac{3}{5}} \zeta _5 ^{3 j} t^{3}  \right) .
\end{eqnarray*}
\end{Def}
For $d$ which divides into $n$, 
there is the surjective map $\rho:E_n \rightarrow  E_d$ 
given by $(x, \ y, \ t) \mapsto (x, \ y, \ t^{n/d})$, and then the inverse image $\rho^*(P_{d,j})$ 
defines a $\C (t) $-rational point of $E_n$. In what follows, we use the same symbol 
$P_{d, \ j}$ for $\rho^*(P_{d, \ j})$ since it will not be confusing from the contexts. 
\begin{Th} \label{生成元}
Let $f : {\cal E}_n \rightarrow {\mathbb P}_{\C}^1 \ (n \in {\mathbb N})$ be the elliptic surfaces 
associated to the elliptic curves $E_n : y^2 = x^3 + x^2 + t^n \ (n \in {\mathbb N})$ over ${\mathbb P}_{\C}^1$. 
Then, for each $n \in {\mathbb N}$, 
${\rm NS} ({\cal E}_n)$ 
has a $\mathbb Q$-basis 
$C_0, \ \infty , \ D_{d, \ j}, \ F_{t, \ a} \ (d \in {\rm Adm}_4, \ d \mid n , \ j \in (\Z/ d \Z)^{\times}, t \in \Sigma ({\cal E}_n), 1\leq a\leq m_t - 1) $, 
where 
$D_{d, \ j} = (P_{d, \ j}) - \infty$ and 
the other notations are same as section 2. 
Moreover if ${\cal E}_n$ is rational {\rm (}i.e., $n \leq 6${\rm )}, then these divisors form a $\Z$-basis. 
\end{Th}

\begin{proof}
In the cases of $n = 6l+1$ with $l \not \equiv 4\ {\rm mod} \ 5$ or $n = 6l+5$ with $l \not \equiv 0 \ {\rm mod} \ 5$, we have $r = 0$ by Table $1$, 
so the assertion follows immediately from Proposition \ref{Shioda}. 
We consider the other cases. 
All we have to do is to show the linear independence of the divisors in Theorem \ref{生成元}. 
Let $ \ d(n) \ $ be the least common multiple of all numbers in $\{ d \in {\rm Adm}_4  \ ; \ d \mid n  \}$. 
Note that the set is non-empty in these cases. 
Since $n$ divides by $d(n)$, there exists a rational map 
$${\cal E}_n \longrightarrow {\cal E}_{d(n)} , \ \ (x, \ y, \ t) \longmapsto (x, \ y, \ t^{n/d (n)}). $$
This map induce an injection 
$$E_{d(n)} (\C (t))  \hookrightarrow  E_n (\C (t)) , \ (x(t) , \ y(t)) \longmapsto \left( x(t^{n/d (n)}) , \ y(t^{n/d (n)}) \right) .$$
We obtain ${\rm rank} (E_{d (n)} (\C (t))) = {\rm rank} (E_n (\C (t)))$ by (\ref{r4}), 
hence linearly independent points of 
$E_{d(n)} (\C (t))$ 
are those of $E_n (\C (t))$. 
There exists an injection 
$$E_{d(n)} (\C (t)) \hookrightarrow E_{60} (\C (t)) , \ (x(t) , \ y(t)) \longmapsto \left( x(t^{60/d (n)}) , \ y(t^{60/d (n)}) \right) .$$
In particular, points in $E_{d(n)} (\C (t))$ are linearly independent 
if and only if so are they in $E_{60} (\C (t))$. 
Therefore it is sufficient to show the assertion in the case of $n = 60$.
Recall that the surface ${\cal E}_{60}$ has singular fibers of type ${\rm I}_{60}$ over $0$, type ${\rm I}_{1}$ over $\zeta _{60} ^i \sqrt[60]{-4 / 27} \ \ (0 \leq i \leq 59)$ and type ${\rm I_0}$ 
over $\infty$. 

We want to show that the divisors 
$C_0 , \ \infty , \ D_{2, \ 1} , \ D_{3, \ 1} , \ D_{3, \ 2} , \ D_{4, \ 1} , \ D_{4, \ 3} $, $D_{5, \ 1} , \ D_{5, \ 2}, \ D_{5, \ 3} , \ D_{5, \ 4} , \ F_{0, \ 1}, \ \cdots , $ $F_{0, \ 59}$
are $\mathbb Q$-basis of ${\rm NS} ({\cal E} _{60} )$. 
Equivalently the matrix 
$$N = \left[
\begin{array}{c c c}
(D_{2, \ 1} + \Phi _{2, \ 1}) \cdot D_{2, \ 1} & \cdots &  (D_{2, \ 1} + \Phi _{2, \ 1} ) \cdot D_{5, \ 4} \\
\vdots & \ddots & \vdots \\
(D_{5, \ 4} +\Phi _{5, \ 4}) \cdot D_{2, \ 1} & \cdots & (D_{5, \ 4} + \Phi _{5, \ 4}) \cdot D_{5, \ 4} \\
 \end{array}
\right]$$
has a nonzero determinant (see Section $2$ for the notations). 

Firstly we compute the self intersection numbers 
$\infty ^2 , \ (P_{2, \ 1})^2 , \ (P_{3, \ i})^2$, $(P_{4, \ j})^2, \ (P_{5, \ k})^2$. 
\begin{Lem} \label{標準因子}
Let $n = 6 l + k$ with $l \geq 0 , \ 1 \leq k \leq 6$. 
Then the canonical divisor $K_{{\cal E}_{n}}$ of the elliptic surface $f :  {\cal E}_{n} \longrightarrow \mathbb{P}_{\C} ^1$ is 
$K_{{\cal E}_{n}} \cong f^* {\cal O}_{\mathbb{P}_{\C} ^1} (l -1)$, 
and we have $(P)^2 = \infty ^2 = - (l + 1)$ for each point $P \in E_n (\C (t))$. 
In particular, if $n$ equals $60$, then we have 
\begin{eqnarray} \label{self}
(P_{2, \ 1})^2 = (P_{3, \ i})^2 = (P_{4, \ j})^2 = (P_{5, \ k})^2 =  \infty ^2 = - 10.
\end{eqnarray}
\end{Lem}

\begin{proof} 
By Kodaira's canonical bundle formula (see \cite{5}), we get
\begin{eqnarray} \label{d=}
K_{{\cal E}_{n}} \cong f^* {\cal O}_{\mathbb{P}_{\C} ^1} (d -2), \ \ \ d = \frac{1}{12} \sum_{t \in \Sigma ({\cal E}_{n})}^{} \varepsilon (t), 
\end{eqnarray}
where $\varepsilon (t)$ is defined as follows: 
\begin{center}
\begin{tabular}{|c|c|c|c|c|c|c|c|c|}
\hline
type & ${\rm I}_n$ & II & III & IV & ${\rm I }_n ^*$ & ${\rm II} ^*$ & ${\rm III} ^*$ & ${\rm IV} ^*$  \\ \hline 
$\varepsilon (t) $ & $n$ & $2$ & $3$ & $4$ & $n + 6$ & $10$ & $9$ & $8$ \\
\hline
\end{tabular}
\end{center}
By the reduction type of the singular fibers of ${{\cal E}_{n}}$,  
we have
$d 
= l + 1 . $
Thus 
\begin{eqnarray} \label{canonical}
K_{{\cal E}_{n}} \cong f^* {\cal O}_{\mathbb{P}_{\C} ^1} (d -2) = f^* {\cal O}_{\mathbb{P}_{\C} ^1} (l -1).
\end{eqnarray}
By (\ref{isom}), each point $P \in E_n (\C (t))$ corresponds to a section $\sigma _P : {\mathbb P}_{\C}^1 \longrightarrow {\cal E}_n$. 
The translation-by-$P$ map on $E_n$ can be uniquely extended to a map $\tau_P : {\cal E}_n \longrightarrow  {\cal E}_n$ by the minimality of ${\cal E}_n$ (see Prop. $9.1$ in \cite{4}). 
It follows that $\tau _P^* D_1 \cdot \tau _P^* D_2 = D_1 \cdot D_2$ for any two divisors $D_1, \ D_2 \in {\rm Div} ({\cal E}_n)$. Hence 
$(P)^2 = \tau_P ^* (P) \cdot \tau_P ^* (P) = \infty ^2 $. 

Since $\infty$ is isomorphic to $\mathbb{P}^1 _{\C}$, $\infty$ is of genus zero.  Thus, by the adjunction formula, we get 
$$\frac{1}{2} \left( \infty ^2 + K_{{\cal E}_{n}} \cdot \infty \right) + 1 = 0, \ \ {\rm i.e., \ \ } \infty ^2 = - \left( K_{{\cal E}_{n}} \cdot \infty + 2 \right). $$
On the other hand, 
we can compute
\begin{eqnarray*}
K_{{\cal E}_{n}} \cdot \infty &=& (f^* {\cal O} _{\mathbb{P}^1} (l - 1)) \cdot \infty = (l - 1) f^* ({\cal O} _{\mathbb{P}^1} (1)) \cdot \infty \\  
&=& (l - 1) C_0 \cdot \infty = l - 1 
\end{eqnarray*}
by (\ref{canonical}). Therefore we get $(P)^2 = \infty ^2 = - (l + 1)$ for all $P \in E_n (\C (t))$.  
\end{proof}

Next we compute the intersection numbers of divisors 
$\infty , \ (P_{2, \ 1}), \ (P_{3, \ i})$, 
$(P_{4, \ j}), \ (P_{5, \ k}) , \ F_{0, \ a} \ \ (1 \leq a \leq 59)$ 
in ${\cal E}_{60}$. 
In the affine surface $X_1 : y^2 = x^3 + x^2 + t^{60}$, divisors $(P_{2, \ 1}) , \ (P_{3, \ i}) , \ (P_{4, \ j}) \ {\rm and} \ (P_{5, \ k})$ are given by 
\begin{eqnarray*}
(P_{2, \ 1}) &=& \left( x = 0 , \ y = - t^{30} \right) , \\ 
(P_{3, \ i}) &=& \left( x = - \zeta _3 ^i t ^{20} , \ y = - \zeta _3 ^i t ^{20} \right) , \\
(P_{4, \ 1}) &=& \left( x = \sqrt{2} t^{15} + 2 t^{30}, \ y = \sqrt{2} t^{15} + 3 t^{30} + 2 \sqrt{2} t^{45} \right) , \\
(P_{4, \ 3}) &=& \left( x = -2 + 2 (-1)^{\frac{1}{4}} t^{15} , \ y = - 2 \sqrt{-1} + 4 \sqrt{-1} (-1)^{\frac{1}{4}} t^{15} + t^{30} \right) , \\
(P_{5, \ k}) &=& \left( x = 2 ^{- \frac{2}{5}} \zeta _5 ^{2 k} t^{24} , \ y = 2 ^{- \frac{2}{5}} \zeta _5 ^{2 k} t^{24} + 2 ^{- \frac{3}{5}} \zeta _5 ^{3 k} t^{36} \right) .
\end{eqnarray*}
Thus $(P_{4, \ 3})$ does not pass through the singular point $(0, \ 0, \ 0)$ of 
$X_1$, however 
$(P_{2, \ 1}), \ (P_{3, \ i}) , \ (P_{4, \ 1})$ and $(P_{5, \ k})$ pass through this point. 
Since this singular point is of type $A_{59}$, we can resolve the singular point by the blowing-up $30$ times. 
We denote by $(x_{(m)} , \ y_{(m)} , \ t_{(m)})$ the coordinates in the neighborhood of the singular point after the $m$-th blowing-up $(1 \leq m \leq 29)$, 
and denote by $(P_{2, \ 1})^{(m)}, \ (P_{3, \ i})^{(m)} , \ (P_{4, \ 1})^{(m)} , \ (P_{5, \ k})^{(m)}$ 
the $m$-th blowing-up of $(P_{2, \ 1}), \ (P_{3, \ i}) , \ (P_{4, \ 1}) , \ (P_{5, \ k})$, respectively. 
These curves are given by 
\begin{eqnarray*}
&&(P_{2, \ 1})^{(m)} = \left( x_{(m)} = 0 , \ y_{(m)} = - t_{(m)} ^{30 - m} \right) , 
\\
&&(P_{3, \ i}) ^{(m)} = \left( x_{(m)} = - \zeta _3 ^i t_{(m)} ^{20 - m} , \ y_{(m)} = \zeta _3 ^i t_{(m)} ^{20 - m} \right) ,
\end{eqnarray*}
\begin{eqnarray*}
&&(P_{4, \ 1}) ^{(m)} = \Big( x_{(m)} = \sqrt{2} t^{15 - m} + 2 t^{30 - m} , \\
&&\hspace{5.2cm} y_{(m)} = \sqrt{2} t^{15 - m} + 3 t^{30 - m} + 2 \sqrt{2} t^{45- m} \Big) ,
\\
&&(P_{5, \ k}) ^{(m)} = \left( x_{(m)} = 2 ^{- \frac{2}{5}} \zeta _5 ^{2 k} t_{(m)}^{24 - m} , \ y_{(m)} = 2 ^{- \frac{2}{5}} \zeta _5 ^{2 k} t_{(m)}^{24 - m} + 2 ^{- \frac{3}{5}} \zeta _5 ^{3 k} t_{(m)}^{36 - m} \right) .
\end{eqnarray*}
In particular, in ${\cal E}_{60}$ divisors $(P_{3, \ i})$ (resp. $(P_{4, \ 1}), \ (P_{5, \ k})$) intersect with either of two ${\mathbb P}_{\C}^1$ which appear 
by the $20$-th (resp. $15, \ 24$-th) blowing-up, 
and divisors $(P_{2, \ 1})$ intersect with unique ${\mathbb P}_{\C}^1$ which appears by the $30$-th blowing-up. 
Hence we may assume that 
$(P_{2, \ 1})$ (resp. $(P_{3, \ i}), \ (P_{4, \ 1}), \ (P_{4, \ 3}), \ (P_{5, \ k})$) intersect with $F_{0, \ 30}$ 
(resp. $F_{0, \ 20}, \ F_{0, \ 15}, \ F_{0, \ 0}, \ F_{0, \ 24}$).  
In addition, in $X_1 \cap (t \not = 0)$ 
\begin{eqnarray*}
&&(P_{2, \ 1}) \cap (P_{3, \ i})
= \emptyset , \\
&&(P_{2, \ 1}) \cap (P_{4, \ 1}) 
= \left\{ \left( 0 , \ - \frac{1}{2} , \ t \right) \mid t^{15} = - \frac{1}{\sqrt{2}} \right\}, \\
&&(P_{2, \ 1}) \cap (P_{4, \ 3}) 
= \left\{ \left( 0 , \ - \sqrt{-1} , \ t \right) \mid t^{15} = (-1)^{\frac{3}{4}} \right\}, \\
&&(P_{2, \ 1}) \cap (P_{5, \ k}) 
= \emptyset , \\ 
&&(P_{3, \ 1}) \cap (P_{3, \ 2}) = \emptyset , \\
&&(P_{3, \ i}) \cap (P_{4, \ 1}) 
= \left\{ \left( \frac{1}{\sqrt{2}}, \ \frac{1}{\sqrt{2}} , \ t \right) \mid t^{5} = - \zeta _3 ^{2 i} \frac{1}{\sqrt{2}}  \right\}, \\
&&(P_{3, \ i}) \cap (P_{4, \ 3}) 
= \left\{ (x (t) , \ y(t) , \ t) \mid \zeta _3^{2 i} t^{10} - (1 + \sqrt{-1}) ((-1)^{\frac{1}{4}} \zeta _3^i t^5 - 1) = 0 \right\}, \\
&&(P_{3, \ i}) \cap (P_{5, \ k}) 
= \emptyset , \\ 
&&(P_{4, \ 1}) \cap (P_{4, \ 3}) = \emptyset , \\
&&(P_{4, \ 1}) \cap (P_{5, \ k}) 
= \left\{ (x(t) , \ y(t) , t) \mid 2^{\frac{7}{10}} \zeta _5^{3 k} t^6 + \zeta _5 ^{4 k} t^3 + 2^{\frac{3}{10}} = 0 \right\}, \\
&&(P_{4, \ 3}) \cap (P_{5, \ k}) 
= \Big\{ (x(t) , \ y(t) , t) \mid 
\zeta _5^{3 k} t^6 \left( \zeta _5^{3 k} t^6 - 2^{\frac{4}{5}} (-1)^{\frac{1}{4}} \zeta _5^{4 k} t^3 + 2^{\frac{3}{5}} \sqrt{-1} \right)  
\\
&& \hspace{4.7cm} 
- 2^{\frac{1}{5}} (1 + \sqrt{-1}) \left( 2^{\frac{1}{5}} (-1)^{\frac{1}{4}} \zeta _5 ^{4 k} t^3 - \sqrt{-1} \right) = 0 \Big\}, \\
&&(P_{5, \ k_1}) \cap (P_{5, \ k_2}) = \emptyset \ \ (k_1 \not = k_2) . 
\end{eqnarray*}
and local intersection numbers of divisors $(P_{2, \ 1}), \ (P_{3, \ i}), \ (P_{4, \ j}), \ (P_{5, \ k})$ at these intersection points are all one.  

On the other hand, in the $\infty$-model $X_2$ (i.e., the surface obtained by the variable transformation
$\overline{x} = x / t^{2 0}, \ \overline{y} = y / t^{3 0}, \ \overline{t} = 1 / t$ ) or its projection $\overline{X_2}$, 
divisors $(P_{2, \ 1}), \ (P_{3, \ i}), \ (P_{4, \ j})$ and $(P_{5, \ k})$ are given by 
\begin{eqnarray*}
(P_{2, \ 1}) &=& \left( \bar{x} = 0 , \ \bar{y} = - 1  \right) , \\
(P_{3, \ i}) &=& \left( \bar{x} = - \zeta _3 ^i , \ \bar{y} = - \zeta _3 ^i \overline{t} ^{10} \right) , \\
(P_{4, \ 1}) &=&
 \left\{ \left( \left[ \sqrt{2}  \overline{t}^{20} + 2 \overline{t}^{5} : \ \sqrt{2}  \overline{t}^{30} + 3 \overline{t}^{15} +2^{\frac{2}{3}} :  \overline{t} ^{15} \right] ,  \ \overline{t} \right) 
\mid \overline{t} \in \mathbb{A}^1 \right\} , \\ 
(P_{4, \ 3}) &=& \left(  \bar{x} = -2  \bar{t}^{20} + 2 (-1)^{\frac{1}{4}} \bar{t}^{5} , \  \bar{y} = - 2 \sqrt{-1} \bar{t}^{30} + 4 \sqrt{-1} (-1)^{\frac{1}{4}} \bar{t}^{15} + 1 \right) , \\
(P_{5, \ k}) &=& \left\{ \left( \left[ 2 ^{- \frac{2}{5}} \zeta _5 ^{2 k} \overline{t} ^{2}  : \ 2 ^{- \frac{2}{5}} \zeta _5 ^{2 k} \overline{t}^{12}  + 2 ^{- \frac{3}{5}} \zeta _5 ^{3 k} 
:  \overline{t} ^{6} \right] , \ \overline{t} \right) 
\mid \overline{t} \in \mathbb{A}^1 \right\}. 
\end{eqnarray*}
Thus when $\overline{t}$ equal $0$ (i.e., $t$ equal $\infty$), divisors $(P_{4, \ 1}), \ (P_{5, \ k})$ and $\infty$ intersect at $([0: \ 1: \ 0], \ 0) \in \overline{X_2}$ and 
the other pairs of 
$(P_{2, \ 1}), \ (P_{3, \ i}), \ (P_{4, \ j}), \ (P_{5, \ k})$ and $\infty$ do not intersect.  
Moreover the local intersection number of $(P_{4, \ 1})$ and $\infty$ at this point is five and 
the 
numbers of the other pairs of $(P_{4, \ 1}), \ (P_{5, \ k})$ and $\infty$ 
is two. 
From the above and (\ref{self}), we obtain 
\begin{eqnarray*}
(P_{2, \ 1}) \cdot (P_{d, \ l}) &=& \left\{
\begin{array}{c l}
-10 & (d = 2), \\
 0 & (d = 3, \ 5), \\
 15 & (d = 4), \\
 \end{array}
\right. 
\\
(P_{3, \ i}) \cdot  \ (P_{d, \ l}) &=& \left\{
\begin{array}{c l}
-10 & (d = 3, \ i = l), \\
 0 & (d = 3, \ i \not = l), \\
 5 & (d = 4, \ j = 1), \\
 10 & (d = 4, \ j = 3), \\
 0 & (d = 5), \\
 \end{array}
\right. 
\\
(P_{4, \ j}) \cdot (P_{d, \ l}) &=& \left\{
\begin{array}{c l}
-10 & (d = 4, \ j = l), \\
 0 & (d = 4, \ j \not = l), \\
 8 & (d = 5, \ j = 1), \\
 12 & (d = 5, \ j = 3), \\
\end{array}
\right. 
\\
(P_{5, \ k}) \cdot (P_{5, \ l}) &=& \left\{
\begin{array}{c l}
-10 & (k = l), \\
 2 & (k \not = l).  \\
\end{array}
\right.
\end{eqnarray*}

Finally we give $\Phi _{d, \ l}$ for $d \in {\rm Adm}_4, \ l \in (\Z / d \Z)^{\times}$ 
by the method mentioned in section $2$
and we compute $(D_{d, \ l} + \Phi _{d, \ l}) \cdot D_{d' , \ l'} \ (d, \ d' \in {\rm Adm}_4, \ l \in (\Z / d \Z)^{\times}, \ l' \in (\Z / d' \Z)^{\times})$. 
Recall that $\Phi _{d, \ l}$ is defined by $\Phi _{d, \ l} = \sum_{i = 1}^{59} a_i (P_{d, \ l}) F_{0, \ i}$ and 
\begin{eqnarray*}
&&(a_1 (P_{d, \ l}) , \ \cdots , \ a_{59} (P_{d, \ l})) \\
&&\hspace{2cm} = -( D_{d, \ l} \cdot F_{0, \ 1} , \ \cdots , \ D_{d, \ l} \cdot F_{0, \ 59} ) \left( F_{0, \ i} \cdot F_{0, \ j} \right) ^{-1} .
\end{eqnarray*}
For integers 
$1 \leq m \leq 59$, 
since $\infty \cdot F_{0, \ m} = 0$, 
we have 
\begin{eqnarray*}
D_{2, \ 1} \cdot F_{0, \ m} &=& (P_{2, \ 1}) \cdot F_{0, \ m} 
= \left\{
\begin{array}{c c}
 1 & {\rm if} \ m = 30, \\
 0 & {\rm if} \ m \not = 30, \\
 \end{array}
\right. \\
D_{3, \ i} \cdot F_{0, \ m} &=& (P_{3, \ i}) \cdot F_{0, \ m} 
= \left\{
\begin{array}{c c}
 1 & {\rm if} \ m = 20, \\
 0 & {\rm if} \ m \not = 20, \\
 \end{array}
\right. \\
D_{4, \ 1} \cdot F_{0, \ m} &=& (P_{4, \ 1}) \cdot F_{0, \ m} 
= \left\{
\begin{array}{c c}
 1 & {\rm if} \ m = 15,\\
 0 & {\rm if} \ m \not = 15, \\
 \end{array}
\right. \\
D_{4, \ 3} \cdot F_{0, \ m} &=& (P_{4, \ 3}) \cdot F_{0, \ m} 
= 0, \\
D_{5, \ k} \cdot F_{0, \ m} &=& (P_{5, \ k}) \cdot F_{0, \ m} 
= \left\{
\begin{array}{c c}
 1 & {\rm if} \ m = 24, \\
 0 & {\rm if} \ m \not = 24. \\
 \end{array}
\right. 
\end{eqnarray*}
Since the reduction type of $({\cal E}_{60})_0$ 
is ${\rm I}_{60}$, 
we have the intersection matrix 
$$\left( F_{0, \ i} \cdot F_{0, \ j} \right) _{1 \leq i, \ j \leq 59} = \left[
\begin{array}{c c c c}
 -2 & 1 &  \cdots & 0 \\
 1 & -2 &  \ddots & \vdots \\
 \vdots & \ddots & \ddots & 1 \\
 0 & \cdots & 1 & -2 \\
 \end{array}
\right]. $$
It is an easy exercise to show that 
the $j$-th row of the inverse matrix of this matrix is 
$\frac{-1}{60} \left[ 60 - j , \ 2 \cdot ( 60 - j) , \ \cdots, \  j \cdot ( 60 - j ), \ j \cdot (59 - j) , \ \cdots j \cdot 2 , \ j \right]. $
Therefore we obtain $a_1 (P_{4, \ 3}) = \cdots = a_{59} (P_{4, \ 3}) = 0$ and 
\begin{eqnarray*}
\left( a_1 (P_{2, \ 1}), \ \cdots , \ a_{59} (P_{2, \ 1} \right) &=& \frac{1}{60} \left[ 30 , \ 2\cdot 30 , \ \cdots , \ 30\cdot 30 , \ \cdots , \ 30\cdot 2 , \ 30 \right] , 
\end{eqnarray*}
\begin{eqnarray*}
\left( a_1 (P_{3, \ i}) , \  \cdots , \  a_{59} (P_{3, \ i}) \right) &=& \frac{1}{60} \left[ 40 , \ 2\cdot 40 , \ \cdots , \ 20\cdot 40 , \ \cdots , \ 20\cdot 2 , \ 20 \right] , \\
\left( a_1 (P_{4, \ 1}) , \  \cdots , \  a_{59} (P_{4, \ 1}) \right) &=& \frac{1}{60} \left[ 45 , \ 2\cdot 45 , \ \cdots , \ 15\cdot 45 , \ \cdots , \ 15\cdot 2 , \ 15 \right] , \\
\left( a_1 (P_{5, \ k}) , \  \cdots , \  a_{59} (P_{5, \ k}) \right) &=& \frac{1}{60} \left[ 36 , \ 2\cdot 36 , \ \cdots , \ 24\cdot 36 , \ \cdots , \ 24\cdot 2 , \ 24 \right] . 
\end{eqnarray*}
In particular, $\Phi _{4, \ 3}$ equal $0$ and $\Phi _{3, \ i}$ (resp. $\Phi _{5, \ k}$) does not depend on $i$ (resp. $k$), 
so we put $\Phi _{2} = \Phi _{2, \ 1}, \ \Phi _{3} = \Phi _{3, \ i}, \ \Phi _{4} = \Phi _{4, \ 1}, \ \Phi _{5} = \Phi _{5, \ k}$. 
We can compute $(D_{d, \ l} + \Phi _{d}) \cdot D_{d' , \ l'} \ \ (d, \ d' \in {\rm Adm}_4, \ l \ \in (\Z / d \Z)^{\times}, \ l' \in (\Z / d' \Z)^{\times})$ as follows. 
\begin{eqnarray*}
(D_{2, \ 1} + \Phi _{2}) \cdot D_{d, \ l} 
&=& \left\{
\begin{array}{c l}
-5 & (d = 2), \\
 0 & (d = 3), \\
 \frac{15}{2} & (d = 4, \ l = 1), \\
 5 & (d = 4, \ l = 3), \\
 0 & (d = 5), \\
 \end{array}
\right. 
\\
(D_{3, \ i} + \Phi _3) \cdot D_{d, \ l} 
&=& \left\{
\begin{array}{c l}
 - \frac{20}{3} & (d = 3 , \ i = l), \\
 \frac{10}{3} & (d = 3, \ i \not = l), \\
 0 & (d = 4, \ 5), \\
 \end{array}
\right. \\
(D_{4, \ j} + \Phi _{4}) \cdot D_{d, \ l} 
&=& \left\{
\begin{array}{c l}
 - \frac{75}{4} & (d = 4, \ j = l = 1), \\
 - 20 & (d = 4, \ j = l = 3), \\
 - 15 & (d = 4, \ j \not = l ), \\
 0 & (d = 5), \\
 \end{array}
\right. \\
(D_{5, \ k} + \Phi _{5}) \cdot D_{d, \ l} 
&=& \left\{
\begin{array}{c l}
 -\frac{48}{5} & (d = 5, \ k = l), \\
 \frac{12}{5} & (d = 5, \ k \not = l). \\
 \end{array}
\right. 
\end{eqnarray*}
Note that since $(D_{d, \ l} + \Phi _d) \cdot F = 0$ for all fibral divisors $F$, we have 
$(D_{d, \ l} + \Phi _{d}) \cdot D_{d' , \ l'} = 
(D_{d', \ l'} + \Phi _{d'}) \cdot D_{d , \ l}$. 
Thus we obtain 
$$\displaystyle N = 
 \left[
\begin{array}{c c c c c c c c c}
-5 & 0 & 0 & \frac{15}{2} & 5 & 0 & 0 & 0 & 0 \\
0 & -\frac{20}{3} & \frac{10}{3} & 0 & 0 & 0 & 0 & 0 & 0 \\
0 & \frac{10}{3} & -\frac{20}{3} & 0 & 0 & 0 & 0 & 0 & 0 \\
\frac{15}{2} & 0 & 0 & -\frac{75}{4} & -15 & 0 & 0 & 0 & 0 \\
5 & 0 & 0 & - 15 & -20 & 0 & 0 & 0 & 0 \\
0 & 0 & 0 & 0 & 0 & -\frac{48}{5} & \frac{12}{5} & \frac{12}{5} & \frac{12}{5} \\
0 & 0 & 0 & 0 & 0 & \frac{12}{5} & -\frac{48}{5} & \frac{12}{5} & \frac{12}{5} \\
0 & 0 & 0 & 0 & 0 & \frac{12}{5} & \frac{12}{5} & -\frac{48}{5} & \frac{12}{5} \\
0 & 0 & 0 & 0 & 0 & \frac{12}{5} & \frac{12}{5} & \frac{12}{5} & -\frac{48}{5} \\
 \end{array}
\right] $$
and 
${\rm det} (N) = -2^8 3^5 5^4 \not = 0$. 
Therefore the divisors $C_0 , \ \infty , \ D_{2, \ 1} , \ D_{3, \ i} , \ D_{4, \ j}$, $D_{5, \ k}, \ F_{0, \ 1}, \ \cdots , $ $F_{0, \ 59}$
are $\mathbb Q$-basis of ${\rm NS} ({\cal E} _{60} )$. 

Similarly in the cases of $n \leq 6$, we can compute ${\rm det} (M) = \pm 1$, where $M$ is the intersection matrix of the divisors in Theorem \ref{生成元}  
(see Lemma \ref{int.mat.} ). 
In particular, the divisors form a $\mathbb Z$-basis of ${\rm NS} ({\cal E} _{n} )$. 
\end{proof}

\subsection{Example 1 in \cite{1}}
Example $1$ in \cite{1} is the minimal elliptic surface whose generic fiber is the elliptic curve defined by 
$$
Y^2 = 4 X^3 - 3 u ^{3 n} X - u^{5 n} \ \ (u \in \mathbb{P}^1 _{\C}, \ n \in {\mathbb N})
$$
over $\C (u)$. 
By changing the variables suitably, the defining equation becomes 
\begin{equation} \label{Ex1}
y^2 = x^3 + t^n x + t^n \ \ (t \in {\mathbb P}^1_{\C}). 
\end{equation}
We denote by $E_n$ the elliptic curve defined by (\ref{Ex1}) and by $f : {\cal E}_n \longrightarrow {\mathbb P}_{\C} ^1$ 
the associated elliptic surface. 
In \cite{1}, he proved that 
the Mordell-Weil rank $r = {\rm rank} (E_n (\C (t)))$
is given by 
\begin{equation} \label{r1}
r = 
\sum_{
d \mid n, \ 
 d \in {\rm Adm}_1 
}
\varphi (d), 
\end{equation}
where 
$\varphi $ is the Euler function and ${\rm Adm}_1 = \{ 1, \ 2, \ 3, \ 7, \ 8, \ 10, \ 12, \ 15, \ 18, \ 20, \ 42\}$. 
\begin{Rem} \label{Rem}
For the use in section 4, we write down the property of ${\rm Adm}_1$. 
$d \in {\rm Adm}_1$ if and only if each $j \in \{  j \in {\mathbb N} \ ; \ 9 d \leq 12 j \leq 10 d \}$ is not relatively prime to $d$. 
Such $d$'s are called $admissible$ in \cite{1}. 
\end{Rem}

\begin{Def} \label{sections 1}
For $d \in {\rm Adm}_1$ and $j \in (\Z / d \Z) ^{\times}$, we define $\C (t)$-rational points $P_{d, \ j}$ of $E_d$ as follows. 
\begin{eqnarray*}
&& P_{1, \ 1} = (-1, \ \sqrt{-1}) , \\
&& P_{2, \ 1} = \left( \sqrt{-1} t, \ - t \right) , \\
&& P_{3, \ j} = \left( - \zeta _3 ^j t , \ \sqrt{-1} \zeta _3 ^{2 j} t^{2} \right) , \\
&& P_{7, \ j} = \left( - \zeta _7 ^{2 j} t^{2} - \zeta _7 ^{3 j} t^{3} , \  
\sqrt{-1} \left( \zeta _7 ^{3 j} t^{3} + \zeta _7 ^{4 j} t^{4} + \zeta _7 ^{5 j} t^{5} \right) \right) , \\
&& P_{8, \ j} = \big( a_0(8,j) t^{2} + a_1(8,j) t^{3} + a_2(8,j) t^{4} , 
\\ && \hspace{3cm} 
b_0(8,j) t^{3} + b_1(8,j) t^{4} + b_2(8,j) t^{5} + b_3(8,j) t^{6} \big) ,
\\
&& P_{10, \ j} = \left( 2^{\frac{2}{5}} \zeta _{10} ^{4 j}  t^{4} , \ - \zeta _{10} ^{5 j} t^{5} - 2^{\frac{1}{5}} \zeta _{10} ^{7 j} t^{7} \right) ,
 \\
&& P_{12, \ j} = 
\big( a_0(12,j) t^{4} + a_1(12,j) t^{5} + a_2(12,j) t^{6} , \\ 
&& \hspace{3cm} b_0(12,j) t^{6} + b_1(12,j) t^{7} + b_2(12,j) t^{8} + b_3(12,j) t^{9} \big) ,
\\
&& P_{15, \ j} = \Big( - \zeta _{15} ^{5 j} t^{5} - 3^{\frac{1}{5}} \zeta _{15} ^{6 j} t^{6} - 3^{\frac{2}{5}} \zeta _{15} ^{7 j} t^{7} , \\ 
&& \hspace{2cm}  \sqrt{-1} \left( 3^{\frac{3}{5}} \zeta _{15} ^{8 j} t^{8} +  3^{\frac{4}{5}} \zeta _{15} ^{9 j} t^{9} 
+ 2 \zeta _{15} ^{10 j} t^{10} +  3^{\frac{1}{5}} \zeta _{15} ^{11 j} t^{11} \right) \Big),  
\\
&& P_{18, \ j} = 
\big( a_0(18,j) \zeta _{18} ^{6 j} t^{6} + a_1(18,j) \zeta _{18} ^{8 j} t^{8} + a_2(18,j)  \zeta _{18} ^{10 j} t^{10} , \\ 
&& \hspace{2cm} b_0(18,j) \zeta _{18} ^{9 j} t^{9} + b_1(18,j) \zeta _{18} ^{11 j} t^{11} \\
&& \hspace{3cm} + b_2(18,j) \zeta _{18} ^{13 j} t^{13} + b_3(18,j) \zeta _{18} ^{15 j} t^{15} \big) ,
 \\
&& P_{20, \ j} = \Big( a_0(20,j) \zeta _{20} ^{ 6 j } t^{6} + a_1(20,j) \zeta _{20} ^{ 8 j } t^{8} + a_2(20,j) \zeta _{20} ^{ 10 j } t^{10} , \\ 
&&\hspace{2cm} b_0(20,j) \zeta _{20} ^{ 9 j } t^{9} + b_1(20,j) \zeta _{20} ^{ 11 j } t^{11} \\
&&\hspace{3cm}  + b_2(20,j) \zeta _{20} ^{ 13 j } t^{13} + b_3(20,j) \zeta _{20} ^{ 15 j } t^{15} \Big) , 
\\
&& P_{42, \ j} = \Big( a_1 \zeta _{42} ^{14 j} t^{14} + a_2 \zeta _{42} ^{16 j} t^{16} + a_3 \zeta _{42} ^{18 j} t^{18} 
+ a_4 \zeta _{42} ^{20 j} t^{20} + a_5 \zeta _{42} ^{22 j} t^{22} , \\
&& \hspace{2cm}b_1 \zeta _{42} ^{21 j} t^{21} + b_2 \zeta _{42} ^{23 j} t^{23} + b_3 \zeta _{42} ^{25 j} t^{25} \\
&&\hspace{3cm}+ b_4 \zeta _{42} ^{27 j} t^{27} + b_5 \zeta _{42} ^{29 j} t^{29} + b_6 \zeta _{42} ^{31 j} t^{31} + b_7 \zeta _{42} ^{33 j} t^{33} \Big), \\
\end{eqnarray*}
where the coefficients $a_k(d, j), \ b_k(d, j)$ are given by 
\begin{landscape}
{\small
\begin{center}
\begin{tabular}{|c|c|c|c|c|}
\hline
$j$ & $1$ & $3$ & $5$ & $7$ \\ \hline \hline 
$a_0(8, j)$ & $2^{-\frac{1}{2}}$ & $- \sqrt{2}$ & $\sqrt{-1} + (-1)^{\frac{1}{4}} + 1$ & $0$ \\ \hline 
$a_1(8, j)$ & $0$ & $2^{\frac{3}{4}}$ & $0$ & $a_2(8, 7)^{\frac{1}{4}} (2^{\frac{5}{2}} + 4 )$ \\ \hline 
$a_2(8, j)$ & $0$ & $-1$ & $\sqrt{-1}$ & $\sqrt{- 2 \sqrt{2} - 3}$ \\ \hline 
$b_0(8, j)$ & $2^{-\frac{3}{4}}$ & $- 2^{\frac{3}{4}} \sqrt{-1}$ & $(\sqrt{-1} + (-1)^{\frac{1}{4}} + 1)^{\frac{3}{2}}$ & $0$ \\ \hline 
$b_1(8, j)$ & $0$ & $3 \sqrt{-1}$ & $0$ & $1$ \\ \hline 
$b_2(8, j)$ & $2^{-\frac{1}{4}}$ & $-2^{\frac{5}{4}} \sqrt{-1}$ & $0$ & $a_2(8, 7)^{\frac{3}{4}} \sqrt{2 \sqrt{2} + 2} (\sqrt{2} + 1) ^{\frac{1}{4}}$ \\ \hline 
$b_3(8, j)$ & $0$ & $\sqrt{-2}$ & $\frac{3 (-1)^{\frac{1}{4}} (\sqrt{-1} - 1) - 4}{(\sqrt{-1} + (-1)^{\frac{1}{4}} + 1)^{\frac{3}{2}}}$ &
$a_2(8, 7)^{\frac{3}{2}} \sqrt{2 \sqrt{2} + 2} (\sqrt{2} - 1) ^{\frac{1}{4}}$ \\ \hline 
\hline
$j$ & \multicolumn{2}{|c|}{$1$} & \multicolumn{2}{|c|}{$5$} \\ \hline \hline
$a_0(12, j)$ & \multicolumn{2}{|c|}{$- \frac{1}{12} a_1(12, 1)^2 a_2(12, 1) (-33 + 5 a_2(12, 1)^2)$} & \multicolumn{2}{|c|}{$-1$} \\ \hline 
$a_1(12, j)$ & \multicolumn{2}{|c|}{a root of $z^6 + 180 a_2(12, 1) + 388 a_2(12, 1)^3 = 0$} & \multicolumn{2}{|c|}{$0$} \\ \hline 
$a_2(12, j)$ & \multicolumn{2}{|c|}{$\sqrt{2 \sqrt{3} + 3}$}& \multicolumn{2}{|c|}{$\sqrt{\frac{2}{3} \sqrt{3} - 1}$} \\ \hline \hline
$j$ & \multicolumn{2}{|c|}{$7$} & \multicolumn{2}{|c|}{$11$} \\ \hline \hline 
$a_0(12, j)$ & \multicolumn{2}{|c|}{$\frac{1 + \sqrt{-3}}{2}$} & \multicolumn{2}{|c|}{$\frac{1}{4} a_1(12, 11)^2 a_2(12, 11) (5 + 3 a_2(12, 11) ^2)$} \\ \hline 
$a_1(12, j)$ & \multicolumn{2}{|c|}{$0$} & \multicolumn{2}{|c|}{a root of $z^6 -12 a_2(12, 11) + 36 a_2(12, 11)^3 = 0$} \\ \hline 
$a_2(12, j)$ & \multicolumn{2}{|c|}{$\sqrt{\frac{2}{3} \sqrt{3} - 1}$} & \multicolumn{2}{|c|}{$\sqrt{\frac{2}{3} \sqrt{3} - 1}$} \\ \hline \hline 
$b_0(12, j)$ & \multicolumn{4}{|c|}{$ \frac{4 a_0(12, j) a_1(12, j) a_2(12, j) (a_2(12, j)^2 + 1) (3 a_2(12, j)^4 + 6 a_2(12, j)^2 - 1) - 
a_1(12, j)^3 (a_2(12, j)^2 - 1) (a_2(12, j)^4 + 6 a_2(12, j)^2 + 1)}{16 (a_2(12, j)^3 + a_2(12, j))^2 \sqrt{a_2(12, j)^3 + a_2(12, j)}}$} \\ \hline 
$b_1(12, j)$ & \multicolumn{4}{|c|}{$\frac{4 a_0(12, j) a_2(12, j) (a_2(12, j)^2 + 1) (3 a_2(12, j)^2 + 1) 
+ a_1(12, j)^2 (3 a_2(12, j)^4 + 6 a_2(12, j)^2 - 1)}{8 (a_2(12, j)^3 + a_2(12, j)) \sqrt{a_2(12, j)^3 + a_2(12, j)}}$} \\ \hline 
$b_2(12, j)$ & \multicolumn{4}{|c|}{$\frac{a_1(12, j) (3 a_2(12, j)^2 + 1)}{\sqrt{a_2(12, j)^3 + a_2(12, j)}}$} \\ \hline 
$b_3(12, j)$ & \multicolumn{4}{|c|}{$\sqrt{a_2(12, j)^3 + a_2(12, j)}$} \\ \hline 
\hline
$j$ & \multicolumn{2}{|c|}{$1, \ 11$} & \multicolumn{2}{|c|}{$5, \ 13$} \\ \hline \hline
$a_0(18, j)$ & \multicolumn{2}{|c|}{$0$} & \multicolumn{2}{|c|}{$2 b_1(18, j) b_2(18, j)- b_2(18, j)^6$} \\ \hline 
$a_1(18, j)$ & \multicolumn{2}{|c|}{$- 2^{\frac{2}{9}} 3^{- \frac{1}{3}}$} & \multicolumn{2}{|c|}{$b_2(18, j)^2$} \\ \hline 
$a_2(18, j)$ & \multicolumn{2}{|c|}{$2^{- \frac{2}{9}} 3^{- \frac{2}{3}}$} & \multicolumn{2}{|c|}{$0$} \\ \hline 
\end{tabular}
\begin{tabular}{|c|c|c|}
\hline
$j$ & $1, \ 11$ & $5, \ 13$ \\ \hline 
$b_0(18, j)$ & $1$ & a root of $z^3 - 9 z^2 - 9 z + 9 = 0$  \\ \hline 
$b_1(18, j)$ & $0$ & a root of $8919936 - 8011872 b_0(18, j) - 9735552 b_0(18, j)^2 + z^9 = 0$  \\ \hline 
$b_2(18, j)$ & $0$ & $\frac{1}{36} b_1(18, j)^2(-45 - 15 b_0(18, j) + 2 b_0(18, j)^2)$  \\ \hline 
$b_3(18, j)$ & $- 2^{- \frac{1}{3}} 3^{-1}$ & $0$ \\ \hline \hline
$j$ & \multicolumn{2}{|c|}{$7,\ 17$} \\ \hline
$a_0(18, j)$ & \multicolumn{2}{|c|}{$-4^{\frac{1}{3}}$} \\ \hline 
$a_1(18, j)$ & \multicolumn{2}{|c|}{a root of $4 + 3 a_0(18, j)^2 z^3 + 6 a_0(18, j) z^6 + z^9 = 0$} \\ \hline 
$a_2(18, j)$ & \multicolumn{2}{|c|}{$- \frac{1}{12} a_1(18, j)^2 (5 a_0(18, j)^2 + 6 a_0(18, j) a_1(18, j) ^3 + a_1(18, j)^6)$} \\ \hline 
$b_0(18, j)$ & \multicolumn{2}{|c|}{$\frac{(12 a_1(18, j) a_2(18, j)^4 - 4 a_2(18, j)^3) a_0(18, j) - a_1(18, j)^3 a_2(18, j)^3 
+ 3 a_1(18, j)^2 a_2(18, j)^2 + 5 a_1(18, j) a_2(18, j) + 1}{16 a_2(18, j)^{9/2}}$} \\ \hline 
$b_1(18, j)$ & \multicolumn{2}{|c|}{$\frac{12 a_0(18, j) a_2(18, j)^3 + 3a_1(18, j)^2 a_2(18, j)^2 - 2 a_1(18, j) a_2(18, j) - 1}{8 a_2(18, j)^{5/2}}$} \\ \hline 
$b_2(18, j)$ & \multicolumn{2}{|c|}{$\frac{3 a_1(18, j) a_2(18, j)^2 + a_2(18, j)}{2 a_2(18, j) ^{3/2}}$} \\ \hline 
$b_3(18, j)$ & \multicolumn{2}{|c|}{$a_2(18, j)^{3/2}$} \\ \hline 
\hline
$j$ & \multicolumn{2}{|c|}{$1, \ 3, \ 7, \ 9$} \\ \hline \hline 
$a_0(20, j)$ & \multicolumn{2}{|c|}{$\frac{1}{80} a_1(20, j)^2 a_2(20, j) (172 - 220 a_2(20, j)^2 + 117 a_1(20, j)^5 + 205 a_1(20, j)^5  a_2(20, j)^2)$} \\ \hline 
$a_1(20, j)$ & \multicolumn{2}{|c|}{a root of $56 - 328 a_2(20, j)^2 - 2500 a_2(20, j)^2 z^5 + 625 z^{10} = 0$} \\ \hline 
$a_2(20, j)$ & \multicolumn{2}{|c|}{a root of $5 z^4 + 2 z^2 + 1 = 0$} \\ \hline \hline 
$j$ & \multicolumn{2}{|c|}{$11, \ 13, \ 17, \ 19$} \\ \hline \hline
$a_0(20, j)$ & \multicolumn{2}{|c|}{$- \frac{1}{16} a_1(20, j)^2 a_2(20, j) (-191 - 188 a_2(20, j)^2 + 69 a_2(20, j)^4 + 6 a_2(20, j)^6 )$} \\ \hline 
$a_1(20, j)$ & \multicolumn{2}{|c|}{a root of $z^5 + 1 - 45 a_2(20, j)^2 - 15 a_2(20, j)^4 + 15 a_2(20, j)^6 = 0$} \\ \hline 
$a_2(20, j)$ & \multicolumn{2}{|c|}{a root of $z^8 + 12 z^6 - 26 z^4 - 52 z^2 + 1 = 0$} \\ \hline \hline 
$b_0(20, j)$ & \multicolumn{2}{|c|}{$a_0(20, j)^{3/2}$} \\ \hline 
$b_1(20, j)$ & \multicolumn{2}{|c|}{$\frac{3 a_0(20, j)^2 a_1(20, j) + 1}{a_0(20, j)^{3/2}}$} \\ \hline 
$b_2(20, j)$ & \multicolumn{2}{|c|}{$\frac{12 a_0(20, j)^5 a_2(20, j) + 3 a_0(20, j)^4 a_1(20, j)^2 - 6 a_0(20, j)^2 a_1(20, j) - 1}{8 a_0(20, j)^{9/2}}$} \\ \hline 
$b_3(20, j)$ & \multicolumn{2}{|c|}{$\frac{12(a_0(20, j)^7 a_1(20, j) - a_0(20, j)^5) a_2(20, j) - a_0(20, j)^6 a_1(20, j)^3 + 15 a_0(20, j)^4 a_1(20, j)^2 
+ 9 a_0(20, j) ^2 a_1(20, j) + 1}{16 a_0(20, j)^{15/2}}$} \\ \hline 
\end{tabular}
\end{center} }
\end{landscape}
\hspace{-0.5cm}and the set of complex numbers $(a _1 , \ \cdots , \ a _5 , \ b _1, \ \cdots , \ b_7)$ 
are solutions of a system of equations 
\begin{eqnarray*}
b_7^2 &=& a_5^3, \\
2 b_6 b_7 &=& 3 a_4 a_5^2+a_5, \\
2 b_5 b_7 + b_6^2 &=& 3 a_3 a_5^2+3 a_4^2 a_5+a_4, \\
2 b_4 b_7 + 2 b_5 b_6 &=& 3 a_2 a_5^2+6 a_3 a_4 a_5+a_4^3+a_3, \\
2 b_3 b_7 + 2 b_4 b_6 + b_5^2 &=& 3 a_1 a_5^2+(6 a_2 a_4+3 a_3^2) a_5+3 a_3 a_4^2+a_2, \\
2 b_2 b_7 + 2 b_3 b_6 + 2 b_4 b_5 &=& (6 a_1 a_4+6 a_2 a_3) a_5+3 a_2 a_4^2+3 a_3^2 a_4+a_1, \\
2 b_1 b_7 + 2 b_2 b_6 + 2 b_3 b_5 + b_4^2 &=& (6 a_1 a_3+3 a_2^2) a_5+3 a_1 a_4^2+6 a_2 a_3 a_4+a_3^3, \\
2 b_1 b_6 + 2 b_2 b_5 + 2 b_3 b_4 &=& 6 a_1 a_2 a_5+(6 a_1 a_3+3 a_2^2) a_4+3 a_2 a_3^2, \\
2 b_1 b_5 + 2 b_2 b_4 + b_3^2 &=& 3 a_1^2 a_5+6 a_1 a_2 a_4+3 a_1 a_3^2+3 a_2^2 a_3, \\
2 b_1 b_4 + 2 b_2 b_3 &=& 3 a_1^2 a_4+6 a_1 a_2 a_3+a_2^3, \\
2 b_1 b_3 + b_2^2 &=& 3 a_1^2 a_3+3 a_1 a_2^2, \\
2 b_1 b_2 &=& 3 a_1^2 a_2, \\
b_1^2 &=& a_1^3+1 . 
\end{eqnarray*}
The system is given by comparing the coefficients of $(b _1 t^{21} + b_2 t^{23} + \cdots + b _7 t^{33})^2$ 
with the coefficients of $(a _1 t^{14} + a _2 t^{16} + \cdots + a _5 t^{22})^3 + t^{42} (a _1 t^{14} + a _2 t^{16} + \cdots + a _5 t^{22}) + t^{42}$. 

\end{Def}

\begin{Th} \label{生成元1}
Let $f : {\cal E}_n \rightarrow {\mathbb P}_{\C}^1 \ (n \in {\mathbb N})$ be the elliptic surfaces 
associated to the elliptic curves $E_n : y^2 = x^3 + t^n x + t^n \ (n \in {\mathbb N})$ over ${\mathbb P}_{\C}^1$. 
Then, for each $n \in {\mathbb N}$, 
${\rm NS} ({\cal E}_n)$ 
has a $\mathbb Q$-basis  
$C_0, \ \infty , \ D_{d, \ j}, \ F_{t, \ a} \ \ (d \in {\rm Adm}_1, \ d \mid n , \ j \in (\Z/ d \Z)^{\times}, \ t \in \Sigma ({\cal E}_n), 1\leq a\leq m_t - 1). $
Moreover if ${\cal E}_n$ is rational {\rm (}i.e., $n = 1, \ 2, \ 3, \ 4, \ 6, \ 7, \ 8$ or $12${\rm )}, then these divisors form a $\Z$-basis. 
\end{Th}
The argument of the proof is the same as the previous one, though the computation is very complicated. 

\subsection{Example 2 in \cite{1}}
Example $2$ in \cite{1} is the minimal elliptic surface whose generic fiber is the elliptic curve defined by 
$$
Y^2 = 4 X^3 - 3 u^n X - u^{2 n} \ \ (u \in \mathbb{P}^1 _{\C}, \ n \in {\mathbb N}) 
$$
over $\C (u)$. 
By putting $ X = -9x/4, \ Y = 27 \sqrt{-1 } y / 4 , \ u = \sqrt[n]{ - 27 / 4} t $, 
the defining equation becomes 
\begin{equation} \label{Ex2}
y^2 = x^3 + t^n x + t^{2 n} \ \ (t \in {\mathbb P}^1_{\C}). 
\end{equation}
We denote by $E^2_n$ the elliptic curve defined by (\ref{Ex2}) and by $f : {\cal E}^2_n \longrightarrow {\mathbb P}_{\C} ^1$ 
the associated elliptic surface. 
Similarly to Example $1$, the Mordell-Weil rank $r = {\rm rank} (E_n (\C (t)))$ is given by 
\begin{equation*}
r = 
\sum_{
d \mid n, \ 
d \in {\rm Adm}_2 
}
\varphi (d) ,
\end{equation*}
where $\varphi $ is the Euler function and ${\rm Adm}_2 = \{ 1, \ 2, \ 5, \ 6, \ 8, \ 9, \ 12, \ 14, \ 20, \ 21, \ 30 \}$. 

We now denote by ${\cal E}^1_{n}$ 
the elliptic surface of Example $1$ 
and by $E^1_n$ 
the associated elliptic curve. 
Recall that 
$$E^1_n : y^2 = x^3 + t^n x + t^n .$$
We assume that $n$ is even and write $n = 2m$. 
By putting $\overline{x} = x/ t^{2 m} , \ \overline{y} = y / t^{3 m} , \ \overline{t} = 1 / t $, 
we obtain 
$$\overline{y}^2 = \overline{x} ^3 + \overline{t}^{2m} \overline{x} + \overline{t}^{4 m} .$$
Since this equation is 
the defining equation of $E_{2 m}^2$, 
we obtain an isomorphism 
\begin{equation} \label{1isom2}
{\cal E}^1_{2 m} \overset{\sim}{\longrightarrow} {\cal E}^2_{2 m}, \ \ (x, \ y, \ t) \longmapsto \left( \frac{x}{t^{2 m}}, \ \frac{y}{t^{3 m}}, \ \frac{1}{t} \right). 
\end{equation}
Therefore we define $\varphi (d)$ rational points of $E^2_d$ for each $d (\geq 2) \in {\rm Adm}_2$ as the images of the points of Definition \ref{sections 1} via this isomorphism. 

\begin{Def} \label{sections 2}
For $d \in {\rm Adm}_2$ and $j \in (\Z / d \Z) ^{\times}$, we define $\C (t)$-rational points $P_{d, \ j}$ of $E^2_{d}$ as follows. 
\begin{eqnarray*}
&& P_{1, \ 1} = \left( 0, \ - t \right) , 
\\
&& P_{2, \ 1} = \left( \sqrt{-1} t , \ - t^{2} \right) , \\
&& P_{5, \ j} = \left( 2^{\frac{2}{5}} \zeta _5 ^{3 j} t^{3} , \ - 2^{\frac{1}{5}} \zeta _5 ^{4 j} t^{4} - t^5 \right) , 
\\
&& P_{6, \ j} = \left( - \zeta _6 ^{4 j} t^{4} , \ \sqrt{-1} \zeta _6 ^{5 j} t^{5} \right) , 
\\
&& P_{8, \ j} = \big( a_2(8,j) t^{4} + a_1(8,j) t^{5} + a_0(8,j) t^{6} , 
\\ && \hspace{3.2cm} 
b_3(8,j) t^{6} + b_2(8,j) t^{7} + b_1(8,j) t^{8} + b_0(8,j) t^{9} \big) ,
\\
&& P_{9, \ j} = \big( a_2(18, \tilde{j}) \zeta _{9} ^{4 j}  t^{4} + a_1(18, \tilde{j}) \zeta _9 ^{5 j} t^{5} + a_0(18, \tilde{j}) \zeta _9 ^{6 j} t^6, \\ 
&& \hspace{2cm} b_3(18, \tilde{j}) \zeta _{9} ^{6 j} t^{6} + b_2(18, \tilde{j}) \zeta _{9} ^{7 j} t^{7} \\
&& \hspace{3cm}+ b_1(18, \tilde{j}) \zeta _{9} ^{8 j} t^{8} + b_0(18, \tilde{j}) t^9 \big) , 
\\
&& P_{12, \ j} = 
\big( a_2(12,j) t^{6} + a_1(12,j) t^{7} + a_0(12,j) t^{8} , \\ 
&& \hspace{1.5cm} b_3(12,j) t^{9} + b_2(12,j) t^{10} + b_1(12,j) t^{11} + b_0(12,j) t^{12} \big) ,
\\
&& P_{14, \ j} = 
\left( - \zeta _{14} ^{8 j} t^{8} -  \zeta _{14} ^{10 j} t^{10} , 
\sqrt{-1} \left( \zeta _{14} ^{11 j} t^{11} + \zeta _{14} ^{13 j} t^{13} + \zeta _{14} ^{15 j} t^{15} \right) \right) , 
\\
&& P_{20, \ j} = \Big( a_2(20,j) \zeta _{20} ^{10 j} t^{10} + a_1(20,j) \zeta _{20} ^{12 j} t^{12} + a_0(20,j) \zeta _{20} ^{14 j} t^{14} , \\ 
&& \hspace{2cm} b_3(20,j) \zeta _{20} ^{15 j } t^{15} + b_2(20,j) \zeta _{20} ^{ 17 j } t^{17} \\
&& \hspace{3cm} + b_1(20,j) \zeta _{20} ^{ 19 j } t^{19} + b_0(20,j) \zeta _{20} ^{21 j } t^{21} \Big) , 
\\
&& P_{21, \ j} = \Big( a_5 \zeta _{21} ^{10 j} t^{10} + a_4 \zeta _{21} ^{11 j} t^{11} + a_3 \zeta _{21} ^{12 j} t^{12} 
+ a_2 \zeta _{21} ^{13 j} t^{13} + a_1 \zeta _{21} ^{14 j} t^{14} , \\
&& \hspace{3cm}b_7 \zeta _{21} ^{15 j} t^{15} + b_6 \zeta _{21} ^{16 j} t^{16} + b_5 \zeta _{21} ^{17 j} t^{17} \\
&& \hspace{4cm}+ b_4 \zeta _{21} ^{18 j} t^{18} + b_3 \zeta _{21} ^{19 j} t^{19} + b_2 \zeta _{21} ^{20 j} t^{20} + b_1 t^{21} \Big) , 
\\
&& P_{30, \ j} = 
\Big( - 3^{\frac{2}{5}} \zeta _{30} ^{16 j} t^{16} - 3^{\frac{1}{5}} \zeta _{30} ^{18 j} t^{18} - \zeta _{30} ^{20 j} t^{20} , \\ 
&& \hspace{2cm}  \sqrt{-1} \left( 3^{\frac{1}{5}} \zeta _{30} ^{23 j} t^{23} +  2 \zeta _{30} ^{25 j} t^{25} 
+ 3^{\frac{4}{5}}  \zeta _{30} ^{27 j} t^{27} +  3^{\frac{1}{5}} \zeta _{30} ^{29 j} t^{29} \right) \Big) , \\
\end{eqnarray*}
where $\tilde{j}$ equals $j$ if $j$ is odd and equals $j + 9$ if $j$ is even,  
and the coefficients $a_k (d,j) , \ b_k (d,j), \ a_1, \ \cdots, \ a_5, \ b_1, \ \cdots , \ b_7$ 
are same as them of 
Definition \ref{sections 1}. 
\end{Def}

The following theorem follows 
from the isomorphism (\ref{1isom2}) and Theorem \ref{生成元1}. 

\begin{Th} \label{生成元2}
Let $f : {\cal E}_n \rightarrow {\mathbb P}_{\C}^1 \ (n \in {\mathbb N})$ be the elliptic surfaces 
associated to the elliptic curves $E_n : y^2 = x^3 + t^n x + t^{2 n} \ (n \in {\mathbb N})$ over ${\mathbb P}_{\C}^1$. 
Then, for each $n \in {\mathbb N}$, ${\rm NS} ({\cal E}_n)$ 
has a $\mathbb Q$-basis 
$C_0, \ \infty , \ D_{d, \ j}, \ F_{t, \ a} \ \ (d \in {\rm Adm}_2, \ d \mid n , \ j \in (\Z/ d \Z)^{\times}, \ t \in \Sigma ({\cal E}_n), 1\leq a\leq m_t - 1). $
Moreover if ${\cal E}_n$ is rational {\rm (}i.e., $n = 1, \ 2, \ 3, \ 4, \ 5, \ 6, \ 8, \ 9$ or $12${\rm )}, then these divisors form a $\Z$-basis. 
\end{Th}

\subsection{Example 3 in \cite{1}}
Example $3$ in \cite{1} is the minimal elliptic surface whose generic fiber is the elliptic curve defined by 
$$
Y^2 = 4 X^3 - 3 u ^{3 n} \left( u^{n} - \frac{8}{9} \right) X + u^{4 n} \left( u^{2 n} - \frac{4}{3} u^n + \frac{8}{27} \right) \ \ (u \in \mathbb{P}^1 _{\C}, \ n \in {\mathbb N}) 
$$
over $\C (u)$. 
By changing the variables suitably, 
the defining equation becomes 
\begin{equation} \label{Ex3}
y^2 = x^3 + x^2 + t^n x + \frac{t^{2 n}}{4} \ \ (t \in {\mathbb P}^1_{\C}). 
\end{equation}
We denote by $E_n$ the elliptic curve defined by (\ref{Ex3}) and by $f : {\cal E}_n \longrightarrow {\mathbb P}_{\C} ^1$ 
the associated elliptic surface. 
By Stiller's list (Example $3$ in \cite{1}), we have that the Mordell-Weil rank $r = {\rm rank} (E_n (\C (t)))$ 
equals $1$ if $n$ is even and equals $0$ if $n$ is odd. 
Therefore 
similarly to Example $1, \ 2$ and $4$, 
we obtain $r = \sum_{d \in {\rm Adm}_3, \ d \mid n} \varphi (d), \ {\rm Adm}_3 = \{ 2 \}$, and  
we can show the following theorem. 

\begin{Th} \label{生成元3}
Let $f : {\cal E}_n \rightarrow {\mathbb P}_{\C}^1 \ (n \in {\mathbb N})$ be the elliptic surfaces 
associated to the elliptic curves $E_n : y^2 = x^3 + x^2 + t^n x + t^{2n} / 4 \ (n \in {\mathbb N})$ over ${\mathbb P}_{\C}^1$. 
For each $n \in {\mathbb N}$, 
if $n$ is odd, 
then ${\rm NS} ({\cal E}_n)$ 
has a $\mathbb Z$-basis 
$C_0, \ \infty , \ F_{t, \ a} \ \ (t \in \Sigma ({\cal E}_n), 1\leq a\leq m_t - 1) $, 
and if $n$ is even, then 
the group has a $\mathbb Q$-basis 
$C_0, \ \infty , \ D_{2, \ 1}, \ F_{t, \ a} \ \ (t \in \Sigma ({\cal E}_n), 1\leq a\leq m_t - 1) $,  
where $D_{2, \ 1} = (P_{2, \ 1}) - \infty$ and $\C (t)$-rational point $P_{2, \ 1}$ is defined by 
$$P_{2, \ 1} = \left( - \frac{1}{2} t^{n } , \ \frac{\sqrt{-2}}{4} t^{\frac{3 n}{2}} \right) .$$
Moreover if ${\cal E}_n$ is rational {\rm (}i.e., $n \leq  3${\rm )}, then these divisors form a $\Z$-basis. 
\end{Th}

\subsection{Example 5 in \cite{1}}
Example $5$ in \cite{1} is the minimal elliptic surface whose generic fiber is the elliptic curve defined by an equation\footnote{The equation in \cite[p.188]{1} is incorrect. } 
\begin{equation*} \label{Ex5 1}
Y^2 = 4 X^3 - 3 u ^{12 k + 3} \left( u^{4 k + 1} - \frac{3}{4} \right) X - u^{20 k + 5} \left( u^{4 k + 1} - \frac{9}{8} \right)  \ \ (u \in \mathbb{P}^1 _{\C}, \ k \in {\mathbb N})
\end{equation*}
over $\C (u)$. 
By changing the variables suitably, 
the defining equation becomes 
\begin{equation*} 
y^2 = x^3 + x^2 + t^{4 k + 1} x \ \ (t \in {\mathbb P}_{\C} ^1).
\end{equation*}

Here we discuss a slightly more general equation 
\begin{equation} \label{Ex5 2}
y^2 = x^3 + x^2 + t^{n} x \ \ (t \in {\mathbb P}_{\C} ^1, \ n \in {\mathbb N}).
\end{equation}
We denote by $E_n$ the elliptic curve defined by (\ref{Ex5 2}) and by $f : {\cal E}_n \longrightarrow {\mathbb P}_{\C} ^1$ 
the associated elliptic surface. 
The surface ${\cal E}_n$ has singular fibers of type ${\rm I}_{2 n}$ over $0$, type ${\rm I}_1$ over $\zeta _n^i \sqrt[n]{1/4} \ (0 \leq i \leq n - 1)$ 
and ${\rm III}^*$ (resp. ${\rm I}_0^*, \ {\rm III}, \ {\rm I}_0$) over $\infty$ as $n \equiv 1$ (resp. $2, \ 3, \ 0$) modulo 4. 
By using Stiller's method, one can show that the Mordell-Weil rank $r = {\rm rank} (E_n (\C (t)))$ is given by 
\begin{equation*} 
r = 
\sum_{
d \mid n, \ 
 d \in {\rm Adm}_5 
}
\varphi (d), 
\end{equation*}
where 
$\varphi $ is the Euler function and ${\rm Adm}_5 = \{ 2, \ 3 \}$. 
We obtain the following theorem similarly to 
the other examples. 


\begin{Th} \label{生成元5}
Let $f : {\cal E}_n \rightarrow {\mathbb P}_{\C}^1 \ (n \in {\mathbb N})$ be the elliptic surfaces 
associated to the elliptic curves $E_n : y^2 = x^3 + x^2 + t^{n} x  \ (n \in {\mathbb N})$ over ${\mathbb P}_{\C}^1$. 
Then, for each $n \in {\mathbb N}$, 
${\rm NS} ({\cal E}_n)$ has a ${\mathbb Q}$-basis $C_0, \ \infty, \ D_{d, \ j}, \ F_{t, \ a} \ 
(d \in {\rm Adm}_5, \ d \mid n, \ j \in (\Z / d \Z)^{\times}, \ t \in \Sigma ({\cal E}_n), \ 1 \leq a \leq m_t - 1)$, 
where 
$D_{d, \ j} = (P_{d, \ j}) - \infty$ and $\C (t)$-rational points $P_{2, \ 1}, \ P_{3, \ j}$ 
are defined by 
\begin{eqnarray*}
&&P_{2, \ 1} = \left( \sqrt{-1} t^{\frac{n}{2}} , \ \sqrt{-1} t^{\frac{n}{2}} \right) , \\
&&P_{3, \ j} = \left( 2^{\frac{2}{3}} \zeta _3^j t^{\frac{n}{3}} , \ 2^{\frac{2}{3}} \zeta _3^j t^{\frac{n}{3}} + 2^{\frac{1}{3}} \zeta _3^{2 j} t^{\frac{2 n}{3}} \right) .
\end{eqnarray*}
Moreover if ${\cal E}_n$ is rational {\rm (}i.e., $n \leq  4${\rm )}, then these divisors form a $\Z$-basis. 
\end{Th}

\section{Alternative proof of Stiller's computations}


Each surface in the Examples $1$, $2$, $3$, $4$ and $5$ has an automorphism such that 
it acts in multiplicity one (i.e. each eigenspace is one-dimensional) 
on the second de Rham cohomology modulo zero and fibral divisor classes. 
Stiller showed this by using the {\it inhomogeneous} de Rham cohomology, and 
this is essentially used in his argument on the computation of Picard numbers.

We gave the explicit $\mathbb Q$-bases 
of the N\'eron-Severi groups in the last section, 
where we used his results on the Picard numbers. 
However once one has the divisors as in the last section, 
one can conclude that they automatically 
form a $\mathbb Q$-basis of 
the N\'eron-Severi group. 
We show it in this section.

\medskip

Let $f : {\cal E}_n \longrightarrow \mathbb{P}_{\C}^1\ (n \in {\mathbb N})$ be one of the families of elliptic surfaces of 
Example $1, \ 2, \ 3, \ 4$ and $5$. 
Let ${\rm NS} ({\cal E}_n) '$ be the subgroup of ${\rm NS} ({\cal E}_n) $ which is generated by all the divisors in Theorems \ref{生成元} etc. 
Put 
$H^2 _{{\rm tr}} ({\cal E}_n) = H^2 ({\cal E}_n , \ \Q ) / {\rm NS} ({\cal E}_n)_{\Q} $, $ V ({\cal E}_n) = H^2 ({\cal E}_n , \ \Q ) / {\rm NS} ({\cal E}_n) ' _{\Q}$. 
The goal is to show ${\rm NS} ({\cal E}_n)_{\Q} = {\rm NS} ({\cal E}_n) ' _{\Q}$, or equivalently
\begin{equation} \label{want}
{\rm dim} \ V ({\cal E}_n) \leq {\rm dim} \ H^2 _{{\rm tr}} ({\cal E}_n) .
\end{equation}

We give a proof of \eqref{want} only for Example $1$ since the same argument works in the other cases.
We already know the dimension of $V ({\cal E}_n)$. In the case of Example $1$, the result can be written as follows. 
\begin{equation}\label{dim1}
{\rm dim} \ V ({\cal E}_n) = \sum_{d \in S_n^1} \varphi (d) , 
\end{equation}
where we put $S_n^1 = \{ d \in {\mathbb N} \ ; \ d \mid n , \ d \not \in {\rm Adm}_1 \cup \{ 4, \ 6 \} \}$ and $\varphi (d)$ is the Euler function. 
In particular, when $n = 1, \ 2, \ 3, \ 4, \ 6, \ 7, \ 8$ or $12$, then \eqref{dim1} is zero, so that there is nothing to prove. 
We assume $n \not = 1, \ 2, \ 3, \ 4, \ 6, \ 7, \ 8$ or $12$.

Let $\sigma : {\cal E}_n \longrightarrow {\cal E}_n$ be an automorphism which is defined by $(x, \ y , \ t) \mapsto (x, \ y , \ \zeta _n ^{-1} t)$, 
and let $\sigma ^*$ be the automorphism on $H^2_{{\rm tr}} ({\cal E}_n) 
$ induced by $\sigma $. 
We denote by $f(T)$ the minimal polynomial of $\sigma ^*$ over $\mathbb Q$. 
If we have 
\begin{equation} \label{last}
f (\zeta _d) = 0 \ \  {\rm for \ each} \ \ d \in S_n ^1,
\end{equation}
then $d$-th cyclotomic polynomial divides into $f(T)$ and hence we have 
$${\rm dim} \ {H}^2 _{{\rm tr}} ({\cal E}_n) 
\geq {\rm deg} \ f(T) \geq 
\sum_{ d \in S_n^1
}
\varphi (d)
=  {\rm dim} \ V ({\cal E}_n)$$
and \eqref{want} follows. 
Let us prove \eqref{last}. 
\begin{Lem} 
Let $n = 12 l + k$ with $l \geq 0 , \ 1 \leq k \leq 12$. 
If $n$ equals $1$, $2$, $3$, $4$, $6$, $7$, $8$ or $12$, then $H^0 ({\cal E}_n , \ \Omega _{{\cal E}_n} ^2) = 0. $ 
Otherwise, 
$H^0 ({\cal E}_n , \ \Omega _{{\cal E}_n} ^2)$ has a basis 
\begin{eqnarray*}
t^{2 n - a(n) - 3} d t \frac{d x}{y}, \ \cdots , \ t^{2 n - a(n) - b(n) - 3} d t \frac{d x}{y}, 
\end{eqnarray*}
where $a(n), \ b(n)$ are defined as follows: 
\begin{center}
\begin{tabular}{|c|c|c|c|c|c|c|}
\hline
$k$ & $1$ & $2$ & $3$ & $4$ & $5$ & $6$ \\ \hline 
$a(n)$ & $9l - 1$ & $9l$ & $9l + 1$ & $9l +2$ & $9l +2$ & $9l +3$ \\ \hline 
$b(n)$ & $l - 1$ & $l - 1$ & $l - 1$ & $l - 1$ & $l$ & $l - 1$ \\ 
\hline \hline
$k$ & $7$ & $8$ & $9$ & $10$ & $11$ & $12$ \\ \hline 
$a(n)$ & $9l +4$ & $9l +5$ & $9l +5$ & $9l +6$ & $9l +7$ & $9l +8$ \\ \hline
$b(n)$ & $l - 1$ & $l - 1$ & $l$ & $l$ & $l$ & $l - 1$ \\ \hline
\end{tabular}
\end{center}
\end{Lem}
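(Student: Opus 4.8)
The plan is to read off $H^0({\cal E}_n,\Omega^2_{{\cal E}_n})$ directly from the elliptic fibration $f:{\cal E}_n\to\mathbb{P}^1_{\C}$. Since $f$ is relatively minimal with a section, the relative dualizing sheaf formalism gives $f_*\Omega^2_{{\cal E}_n}\cong f_*\omega_{{\cal E}_n/\mathbb{P}^1}\otimes\omega_{\mathbb{P}^1}=\mathcal{L}\otimes\omega_{\mathbb{P}^1}$, where $\mathcal{L}$ is Kodaira's fundamental line bundle on $\mathbb{P}^1$; hence $H^0({\cal E}_n,\Omega^2_{{\cal E}_n})\cong H^0(\mathbb{P}^1,\mathcal{L}\otimes\omega_{\mathbb{P}^1})$. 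Because $\deg\mathcal{L}=\chi(\mathcal{O}_{{\cal E}_n})$ and, by Kodaira's canonical bundle formula (the same tool used to prove $K_{{\cal E}_n}\cong f^*\mathcal{O}_{\mathbb{P}^1}(l-1)$ earlier in the paper), $\chi(\mathcal{O}_{{\cal E}_n})=\tfrac1{12}\sum_t\varepsilon(t)$, we get $\dim H^0({\cal E}_n,\Omega^2_{{\cal E}_n})=\max\{0,\chi(\mathcal{O}_{{\cal E}_n})-1\}$. So the first concrete task is a clean computation of $\chi(\mathcal{O}_{{\cal E}_n})$.

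To obtain $\chi(\mathcal{O}_{{\cal E}_n})$ I would first list the singular fibres. Over each root of $4t^n+27=0$ the fibre has type $\mathrm{I}_1$, contributing $n$ to $\sum_t\varepsilon(t)$; the fibres over $t=0$ and $t=\infty$ are additive and are found by Tate's algorithm after the minimalizing substitutions $x\mapsto t^2x,\ y\mapsto t^3y$ at $t=0$ (legitimate exactly $\lfloor n/6\rfloor$ times) and the corresponding rescaling together with $s=1/t$ at $t=\infty$. This is the source of the twelve cases $n\equiv k\pmod{12}$: the fibre over $0$ depends only on $k\bmod 6$ and the fibre over $\infty$ only on $k\bmod 4$. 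Feeding the resulting Euler numbers into the formula above yields $\chi(\mathcal{O}_{{\cal E}_n})=b(n)+2$, so $\dim H^0=b(n)+1$, and in particular $b(n)=-1$ (empty basis, $H^0=0$) precisely for $n\in\{1,2,3,4,6,7,8,12\}$.

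For the explicit basis I would use that every rational $2$-form on ${\cal E}_n$ has the shape $\phi(t)\,dt\wedge\frac{dx}{y}$ with $\phi\in\C(t)$, since $dt\wedge\frac{dx}{y}$ trivializes $\Omega^2$ over the open set where $f$ is smooth. Over a finite $t_0\neq0$ the equation $y^2=x^3+t^nx+t^n$ is already a minimal Weierstrass model, so regularity there only forces $\phi$ to be a polynomial. The remaining conditions come from $t=0$ and $t=\infty$: one tracks how $\frac{dx}{y}$ and $dt$ transform under the minimalizing coordinate changes there — the substitution at $t=0$ multiplies the relative differential by a power of $t$, while the rescaling and the change of uniformizer at $t=\infty$ introduce a power of $s=1/t$ — and, since the minimal Weierstrass model has only rational double points whose minimal resolution is crepant, no extra condition is imposed along the exceptional curves of $f$. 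One concludes that $\phi(t)\,dt\wedge\frac{dx}{y}$ is a regular global $2$-form iff $t^{c_0(n)}\mid\phi$ and $\deg\phi\le c_\infty(n)$ for explicit $c_0(n),c_\infty(n)$ extracted from the two local pictures; the monomials $\phi=t^j$ with $c_0(n)\le j\le c_\infty(n)$ then furnish the asserted basis, with $a(n),b(n)$ recording $c_0(n)$ and $c_\infty(n)-c_0(n)$, the count $c_\infty(n)-c_0(n)+1=\chi(\mathcal{O}_{{\cal E}_n})-1$ being forced automatically by the first step.

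The main obstacle is the bookkeeping across the twelve residue classes. One must run Tate's algorithm carefully at both $t=0$ and $t=\infty$ — in particular counting the admissible minimalizing substitutions correctly, which is exactly what makes $a(n)$ and $b(n)$ piecewise linear in $l$ with the tabulated dependence on $k$ — and then keep precise track of every power of $t$ and of $s=1/t$ acquired by $\frac{dx}{y}$ and $dt$ under each coordinate change, including the original change of variables that brought Stiller's equation $Y^2=4X^3-3u^{3n}X-u^{5n}$ into the normalized form $y^2=x^3+t^nx+t^n$, so that the exponent range in the basis comes out exactly as stated. Since the cohomological argument of the first step already pins down the dimension, the only genuine content beyond routine computation is getting these exponents right and verifying that each listed monomial is indeed a regular $2$-form.
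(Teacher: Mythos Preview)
The paper does not give a proof here; it simply writes ``Exercise'' and refers to Stiller's Proposition~3.3. Your outline is correct and is exactly the standard argument that reference encodes --- compute $\chi(\mathcal{O}_{{\cal E}_n})$ from Kodaira's formula and the fibre types at $t=0,\infty$ via Tate, then pin down the explicit basis by tracking how $dt\wedge dx/y$ scales under the minimalizing substitutions at those two points, using crepancy of the resolution to see that no further conditions arise along the exceptional curves.
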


\begin{proof}
Exercise (e.g. \cite{1} Prop. $3.3$ for details).
\end{proof}

For an integer $i$ with $0 \leq i \leq b(n)$, 
since we have 
\begin{eqnarray*}
\sigma ^* \left( t^{2 n - a(n) - i - 3} d t \frac{d x}{y} \right) = \zeta  _n ^{a(n) + i + 2} \left( t^{2 n - a(n) - i - 3} d t \frac{d x}{y} \right) ,
\end{eqnarray*}
the automorphism $\sigma ^*$ on $H_{\rm tr}^2 ({\cal E}_n)$ over ${\mathbb Q}$ has eigenvalues $\zeta _n ^{a(n) + i + 2}$, 
so we have $f(\zeta _n ^{a(n) + i + 2}) = 0 \ (0 \leq i \leq b (n))$. 
On the other hand, since we have 
$$J (n) := \left\{ a (n) + 2, \ \cdots , \ a(n) + b(n) + 2 \right\} = \{ j \in {\mathbb N} \mid 9 n < 12 j < 10 n \} ,$$
we obtain 
$\left\{ (a (d) + 2) n/ d, \ \cdots , \ ( a(d) + b(d) + 2 ) n /d \right\} \subset J (n)$ 
for each $d$ which divides into $n$. 
Then $J (d) = \emptyset$ if and only if $d = 1, \ 2, \ 3, \ 4, \ 6, \ 7, \ 8$ or $12$. 
In addition, $d \in {\rm Adm}_1$ if and only if 
each $j \in \{ j \in {\mathbb N} \mid 9 d \leq 12 j \leq 10 d \}$ 
is not relatively prime to $d$ (see Remark \ref{Rem}). 
Therefore for each $d \in S_n^1$, there exists a natural number $j$ which is relatively prime to $d$ such that $j n /d \in J (n)$, and 
we have $\zeta _n ^{j n /d} = \zeta _d ^j $. 
This implies \eqref{last}. 

\section{Acknowledgements}

The author would like to thank Professor Masanori Asakura for helpful comments and suggestions. 
He 
also thanks Professor Matthias Sch$\ddot{\rm u}$tt whose comments made enormous contribution to this paper.

\smallskip
Masamichi Kuroda \\
Department of Mathematics \\
Hokkaido University \\
Sapporo 060-0810 \\
Japan 

\smallskip
\hspace{-0.5cm}m-kuroda@math.sci.hokudai.ac.jp

\end{document}